\numberwithin{equation}{section}
\newtheorem{theorem}{Theorem}[section]
\newtheorem{proposition}[theorem]{Proposition}
\newtheorem{remark}[theorem]{Remark}
\newtheorem{definition}{Definition}[section]
\newtheorem{claim}{Claim}[section]
\newtheorem{lemma}{Lemma}[section]
\newtheorem{example}{Example}[section]
\begin{document} 
\title{ Linear second order elliptic partial differential equations with Nonlinear Boundary Conditions 
at Resonance Without Landesman-Lazer Conditions }

\author[Alzaki Fadlallah\hfil ]
{Alzaki Fadlallah}

\address{Alzaki Fadlallah \hfill\break
Department of Mathematics, University of Alabama at
Birmingham \hfill\break
Birmingham, Alabama 35294-1170, USA }
\email{ zakima99@@math.uab.edu }


\begin{abstract}
We are concerned with the solvability of linear second order elliptic partial
differential equations with nonlinear boundary conditions at resonance, in which the nonlinear 
boundary conditions perturbation is not necessarily required to satisfy Landesman-Lazer conditions
or the monotonicity assumption. The nonlinearity may be unbounded. The nonlinearity interact, 
in some sense with the Steklov spectrum on boundary nonlinearity. 
The proofs are based on a priori estimates for possible solutions to a homotopy on
suitable trace and topological degree arguments. 

\end{abstract}
\maketitle

\section{Introduction}
This paper is concerned existence results for strong solutions of linear second order
elliptic partial differential  equations  with nonlinear boundary
condition at resonance of the form
\begin{equation}\label{E1}
\begin{gathered}
 -\Delta u + c(x)u = 0 \quad\text{in } \Omega,\\
\frac{\partial u}{\partial \nu}=\mu_{j}u+g(x,u)+h(x) \quad\text{on }
\partial\Omega,
\end{gathered}
\end{equation}

where $\Omega\subset\mathbb{R}^N$ , $N\geq 2$ is a bounded domain  with boundary
$\partial \Omega$ of class $C^2$, $c\in L^p(\Omega)$ with $p\ge N$ and  $c\ge 0$ a.e. on $\Omega$
with strict inequality on a set of positive measure, $\partial/\partial\nu :=\nu\cdot\nabla$
is the outward (unit) normal derivative on $\partial\Omega$, $\mu_j$ is $j^{th}$ eigenvalue of the problem 
\begin{equation}\label{E2}
\begin{gathered}
 -\Delta u + c(x)u = 0 \quad\text{in } \Omega,\\
\frac{\partial u}{\partial \nu}=\mu u \quad\text{on }
\partial\Omega,
\end{gathered}
\end{equation}

$g\colon\partial\Omega\times\mathbb{R}\rightarrow\mathbb{R}$
satisfies Carath\'{e}odory conditions
i,e.; 
\begin{description}
 \item[i] $g(.,u)$ is measurable on $\partial\Omega$, for each $u\in\mathbb{R}$,
\item[ii] $g(x,.)$ is continuous on $\mathbb{R},$ for $a.e.x\in \partial\Omega$,
\item[iii]for any constant $r>0$, there exists a function \\$\gamma_{r}\in L^{2}(\partial\Omega),$ such that 
\begin{equation}\label{00}
\begin{gathered}
|g(x,u)|\leq\gamma_{r}(x),
\end{gathered}
\end{equation}

for $a.e.x\in \Omega$, and all $u\in\mathbb{R}$ with $|u|\leq r,$
\end{description}

 and $h\in L^{2}(\partial\Omega).$ 
 By a (strong) solution to Eq.(\ref{E1}) we mean a function $u\in W^{2}_{p}(\Omega)$ satisfies Eq(\ref{E1}) (the second equality in Eq.(\ref{E1}) being 
 satisfied in the sense of trace). \\
 The paper is organized as follows:- In section 2 we study some of the properties of problem (\ref{E2}). In section 3 is devoted 
 the main results, and we illustrate our main theorem by giving an example of an unbounded nonlinear at boundary of $\Omega$
 doesn't satisfy Landesman-Lazer conditions at the boundary, non monotonicity assumption at the boundary. We conclude the paper 
 with some further results and  remarks. All the of our results are based upon Leray-Schauder continuation method and topological degree 
 
 \section{some of the properties of problem (\ref{E2})}

Let $H^{1}(\Omega)=W^{1,2}(\Omega)$
where $W^{1,2}(\Omega)$ is usual real Sobolev space of functions on $\Omega$ \\
Let defined the real inner-product as 
$$<u,v>_{c}:=\int_{\Omega}\triangledown u.\triangledown v+\int_{\Omega}c(x)uv~~~~\forall u,v\in  H^{1}(\Omega) $$
we proof that $<u,v>_{c}$ indeed inner-product 
{\color{blue}first we know that $H^{1}(\Omega) \hookrightarrow L^{P^*}(\Omega)$ where $P^*=\frac{2N}{N-2}$ then 
$$\int_{\Omega}c(x)u^2\leq^{ Holder~~ inequality}\left(\int_{\Omega}c(x)^p\right)^{\frac{1}{p}}\left(\int_{\Omega}u^{2q}\right)^{\frac{1}{q}}$$ 
where $\frac{1}{p}+\frac{1}{q}=1$, since $u\in L^{\frac{2N}{N-2}}(\Omega)$ this implies   that $q=\frac{N}{N-2}$, such that 
 $\left(\left(\int_{\Omega}u^{2\frac{N}{N-2}}\right)^{\frac{N-2}{2N}}\right)^2=||u||^{2}_{L^{\frac{2N}{N-2}}(\Omega)}$ so 
$\frac{1}{p}=1-\frac{1}{q}=1-\frac{N-2}{N}=\frac{2}{N},$ so that $p=\frac{N}{2}$, this implies that $c\in L^{\frac{N}{2}}(\Omega)$,  since $\Omega$ is bounded this implies that ( if $1\leq r < s\leq\infty$,
 then $L^{s}(\Omega)\subset L^{r}(\Omega)$) so that $c\in L^{p}(\Omega)$ for any $p\geq\frac{N}{2}$ for $N\geq 3$ ( when $N=2$, p=1). 
then $\int_{\Omega}c(x)u^2< \infty$  for all  $p\geq\frac{N}{2}$ and for all $u\in H^{1}(\Omega)$}
\begin{itemize}
 \item $$<u,v>_{c}=\int_{\Omega}\triangledown u.\triangledown v+\int_{\Omega}c(x)uv=\int_{\Omega}\triangledown v.\triangledown u+\int_{\Omega}c(x)vu=<v,u>_{c}~~~~\forall u,v\in  H^{1}(\Omega)$$
\item $$\forall a\in\mathbb{R}~~<au,v>_{c}=\int_{\Omega}\triangledown au.\triangledown v+\int_{\Omega}c(x)auv=a\left(\int_{\Omega}\triangledown u.\triangledown v+\int_{\Omega}c(x)uv\right)=a<u,v>_{c}$$
\item $$<u,u>_{c}=\int_{\Omega}|\triangledown u|^{2}+\int_{\Omega}c(x)u^2\geq 0$$, if $u\equiv 0$ this implies that $<u,u>_{c}=0$, if $<u,u>_{c}=0$ 
this implies that $\int_{\Omega}|\triangledown u|^{2}+\int_{\Omega}c(x)u^2=0$ there for $\int_{\Omega}|\triangledown u|^{2}=0$ and 
$\int_{\Omega}c(x)u^2=0$ since $c(x)> 0$ this implies that $u\equiv 0$ 
\end{itemize}
We will first study the spectrum that will be used for the comparison with nonlinearities in equation (\eqref{E2}). This spectrum include the Steklov (When $c=0$).\\ 

Consider the linear problem (Eq(\ref{E2}) 
The eigenproblem is to find a pair $(\mu,\varphi)\in\mathbb{R}\times H^{1}(\Omega) $ with $\varphi\not\equiv0$ such that 
\begin{equation}\label{3}
\begin{gathered}
\int_{\Omega}\triangledown \varphi.\triangledown v+\int_{\Omega}c(x)\varphi v=\mu\int_{\partial\Omega}\varphi v~~~~\forall\in H^{1}(\Omega) $$
\end{gathered}
\end{equation}
Now let $v=\varphi$, we see that if there such an eigenpair, then 
$\mu > 0$ and $\int_{\partial\Omega}\varphi^2>0$ since 
$$\int_{\Omega}|\triangledown \varphi|^2+\int_{\Omega}c(x)\varphi^2=\mu\int_{\partial\Omega}\varphi^2 $$
we know that $\varphi\not\equiv 0$ and $\int_{\Omega}c(x)dx >0$
(otherwise, $\varphi$ would be a constant function then we have that $\frac{1}{|\partial\Omega|}\int_{\Omega}c(x)dx=\mu$,   $\mu> 0$)
It is there fore a appropriate to consider the closed linear subspace of $H^{1}(\Omega)$
 defined by 
$$V(\Omega):=\left\{u\in H^{1}(\Omega):\int_{\partial\Omega}u^2=0~.i.e;~\Gamma u=0~ a.e ~{ on}~ \partial\Omega\right\}=H^{1}_{0}(\Omega)$$ where $\Gamma u$ denotes the trace of $u$ on $\partial\Omega$
and to look for the eigenfunctions associated with equation\eqref{E2} in the $c-$orthogonal complement $[V(\Omega)]^{\bot}=[H^{1}_{0}(\Omega)]^{\bot}$
of this subspace in $H^{1}(\Omega)$. Thus, one can split the Hilbert space $H^{1}(\Omega)$ as a direct $c-$orthogonal sum in the following way 
({\color{red} Since$\overline{C^{\infty}_{0}()(\Omega)}^{||.||_{H^{1}(\Omega)}}=H^{1}_{0}(\Omega)$ also we have that $Ker \Gamma=H^{1}_{0}(\Omega)$ i.e.; 
let $u_m\in H^{1}_{0}(\Omega) $ $u_{m}\to u $ in $H^{1}(\Omega)$ we will show that $u\in H^{1}_{0}(\Omega)$ since $\Gamma$ is conditions map you have that $\Gamma u_m\to \Gamma u$ since   $Ker \Gamma=H^{1}_{0}(\Omega)$ this implies that $\Gamma u=0$ $u=0$ on $\partial\Omega$ i.e.; $u_m\to 0$ on $\partial\Omega$ so
 $u\in H^{1}_{0}(\Omega) $ then $H^{1}_{0}(\Omega)$ is closed linear subspace of $H^{1}(\Omega)$ })
$$H^{1}(\Omega)=H^{1}_{0}(\Omega)\oplus_{c} [H^{1}_{0}(\Omega)]^{\bot} $$
Note also if $(\mu,\varphi)\in\mathbb{R}\times H^{1}(\Omega)$ is an eigenpair, then it follows from the definition of $H^{1}_{0}(\Omega$
that 
$$<\varphi,v>_{c}=\int_{\Omega}\triangledown \varphi\triangledown v+\int_{\Omega}c(x)\varphi v=0,~~~\forall v\in H^{1}_{0}(\Omega) ~{and}~\forall\varphi\in [H^{1}_{0}(\Omega)]^{\bot} $$
Besides the Sobolev space $H^{1}(\Omega)$, we shall make use in what follows the real Lebesgue space $L^{q}(\partial\Omega)$ for $1\leq q\leq\infty$, and 
of the continuity and compactness of the trace operator. 
$$\Gamma :H^{1}(\Omega)\to L^q(\partial\Omega)~~for~~1\leq q<\frac{2(n-1)}{n-2}$$
sometime we will just use $u$ in place of $\Gamma u$ when considering the trace of function on $\partial\Omega$ Throughout  this work we denote the $L^{2}(\partial\Omega)-$ inner product by 
$$<u,v>_{\partial}:=\int_{\partial\Omega}uv~~~and~~~||u||^{2}_{\partial}:=\int_{\partial\Omega}u^2~\forall u,v\in H^{1}(\Omega)$$
\newpage
\begin{definition}
Let $\mathbb{F}:H^{1}(\Omega)\to(-\infty,\infty]$ is functional,
then $\mathbb{F}$is said to be  G-differentiable at a point $u\in H^{1}(\Omega)$ if there is  a $\mathbb{F}'(u)$ such that 
$$\displaystyle\lim_{t\to 0}t^{-1}[\mathbb{F}(u+tv)-\mathbb{F}(u)]=\mathbb{F}'(u)v$$
With $\mathbb{F}'(u)$ a continuous  linear functional on  $H^{1}(\Omega)$ In this case, $\mathbb{F}'(u)$ is called the G-derivative of $\mathbb{F}$ at $u$ 
\end{definition}
\begin{theorem}\label{th}
Assume that $c$ as above. Then we have the following. 
\begin{description}
 \item[i] The eigenproblem \eqref{E2} has a sequence of real eigenvalues 
$$0<\mu_{1}\leq\mu_2\leq\mu_3\leq\cdots\leq\mu_j\leq\cdots\to\infty~as~j\to\infty$$ 
each eigenvalue has a finite-dimensional eigenspace. 
\item[ii] The eigenfunctions $\varphi_j$ corresponding to the eigenvalues $\mu_j$ from an $c-$orthogonal and $\partial-$orthonormal family in 
$[H^{1}_{0}(\Omega)]^{\bot}$ ( a closed subspace of $H^{1}(\Omega)$
\item[iii] The normalized eigenfunctions provide a complete $c-$orthonormal basis of $[H^{1}_{0}(\Omega)]^{\bot}$. Moreover, each function in $u\in [H^{1}_{0}(\Omega)]^{\bot}$
has a unique representation of the from 
\begin{equation}\label{4}
\begin{gathered}
u=\displaystyle\sum^{\infty}_{j=1}c_j\varphi_j~with~c_j:=\frac{1}{\mu_j}<u,\varphi_j>_c=<u,\varphi_j>_\partial \\
||u||^{2}_{c}=\displaystyle\sum^{\infty}_{j=1}\mu_j|c_j|^2
\end{gathered}
\end{equation}
In addition, $$||u||^{2}_{\partial}=\displaystyle\sum^{\infty}_{j=1}|c_j|^2$$
\end{description}
\end{theorem}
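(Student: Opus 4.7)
The plan is to recast the weak eigenvalue problem \eqref{3} as a spectral problem for a compact self-adjoint operator on the Hilbert space $([H^{1}_{0}(\Omega)]^{\bot},\langle\cdot,\cdot\rangle_{c})$, and then to invoke the Hilbert--Schmidt spectral theorem. The earlier observation that any eigenfunction $\varphi$ satisfies $\langle\varphi,v\rangle_{c}=0$ for every $v\in H^{1}_{0}(\Omega)$ is what justifies restricting attention to $[H^{1}_{0}(\Omega)]^{\bot}$ from the start; it also explains why all eigenvalues are strictly positive, since the argument already given shows $\mu\int_{\partial\Omega}\varphi^{2}=\|\varphi\|_{c}^{2}>0$ on this subspace.

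First I would check that $([H^{1}_{0}(\Omega)]^{\bot},\langle\cdot,\cdot\rangle_{c})$ is a genuine Hilbert space (closed subspace of $H^{1}(\Omega)$) and that, by the trace inequality, for each fixed $u$ the map $v\mapsto \langle u,v\rangle_{\partial}=\int_{\partial\Omega}uv$ is a bounded linear functional on this space. The Riesz representation theorem then produces a unique $Tu\in [H^{1}_{0}(\Omega)]^{\bot}$ with
\[
\langle Tu,v\rangle_{c}=\langle u,v\rangle_{\partial}\qquad\text{for all }v\in[H^{1}_{0}(\Omega)]^{\bot}.
\]
Symmetry of the trace pairing makes $T$ self-adjoint, and taking $v=u$ shows $T$ is positive semidefinite with $\langle Tu,u\rangle_{c}=\|u\|_{\partial}^{2}$; in fact $T$ is injective on $[H^{1}_{0}(\Omega)]^{\bot}$ because $Tu=0$ forces $\|u\|_{\partial}=0$, i.e.\ $u\in H^{1}_{0}(\Omega)\cap [H^{1}_{0}(\Omega)]^{\bot}=\{0\}$.

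The crucial compactness step is to show that $T$ is a compact operator. This follows from the compactness of the trace embedding $\Gamma\colon H^{1}(\Omega)\to L^{2}(\partial\Omega)$: if $u_{n}\rightharpoonup u$ weakly in $[H^{1}_{0}(\Omega)]^{\bot}$, then $\Gamma u_{n}\to\Gamma u$ strongly in $L^{2}(\partial\Omega)$, and writing $\|Tu_{n}-Tu\|_{c}^{2}=\langle T(u_{n}-u),u_{n}-u\rangle_{\partial}\leq \|u_{n}-u\|_{\partial}\,\|T(u_{n}-u)\|_{\partial}$ together with the trace continuity of $T(u_{n}-u)$ yields $Tu_{n}\to Tu$ in $\|\cdot\|_{c}$. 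I expect this compactness argument to be the main technical step; everything else is bookkeeping.

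Once $T$ is known to be compact, self-adjoint and positive, the Hilbert--Schmidt theorem supplies a sequence of positive eigenvalues $\lambda_{1}\geq\lambda_{2}\geq\cdots\to 0$ (each of finite multiplicity) and a complete $\langle\cdot,\cdot\rangle_{c}$-orthonormal system of eigenfunctions in $[H^{1}_{0}(\Omega)]^{\bot}$. Setting $\mu_{j}:=1/\lambda_{j}$ gives a sequence $0<\mu_{1}\leq\mu_{2}\leq\cdots\to\infty$ of eigenvalues of \eqref{E2} with finite-dimensional eigenspaces, establishing (i). For (ii), the relation $\langle T\varphi_{j},\varphi_{k}\rangle_{c}=\langle\varphi_{j},\varphi_{k}\rangle_{\partial}$ together with $T\varphi_{j}=\mu_{j}^{-1}\varphi_{j}$ gives $\langle\varphi_{j},\varphi_{k}\rangle_{\partial}=\mu_{j}^{-1}\langle\varphi_{j},\varphi_{k}\rangle_{c}$, so $c$-orthogonality of distinct eigenfunctions forces $\partial$-orthogonality, and (within each eigenspace) Gram--Schmidt in $\langle\cdot,\cdot\rangle_{\partial}$ produces the $\partial$-orthonormal family. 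For (iii), completeness of the spectral basis of $T$ in $[H^{1}_{0}(\Omega)]^{\bot}$ gives the expansion $u=\sum c_{j}\varphi_{j}$ in $\|\cdot\|_{c}$; the two formulas for $c_{j}$ follow by pairing with $\varphi_{j}$ in $\langle\cdot,\cdot\rangle_{c}$ and $\langle\cdot,\cdot\rangle_{\partial}$ respectively and using $\|\varphi_{j}\|_{c}^{2}=\mu_{j}\|\varphi_{j}\|_{\partial}^{2}=\mu_{j}$, while the two Parseval identities are immediate from $c$-orthonormality of the normalized basis and the relation between the two inner products on each eigenspace.
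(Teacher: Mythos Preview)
Your argument is correct, but it follows a genuinely different route from the paper. The paper builds the eigenvalues variationally: it minimizes the Rayleigh quotient $\mathbb{I}(u)/\|u\|_{\partial}^{2}$ on $[H^{1}_{0}(\Omega)]^{\bot}$ to obtain $\mu_{1}$ and $\varphi_{1}$, then iterates by restricting to successive $\partial$-orthogonal complements $W_{j}$ to produce $\mu_{j+1}$ and $\varphi_{j+1}$; the claims $\mu_{j}\to\infty$, finite multiplicity, and completeness of $\{\psi_{j}=\mu_{j}^{-1/2}\varphi_{j}\}$ are each argued separately by contradiction using the compact trace embedding. Your approach instead packages the whole problem as $T\varphi=\mu^{-1}\varphi$ for the compact, self-adjoint, injective operator $T$ defined by $\langle Tu,v\rangle_{c}=\langle u,v\rangle_{\partial}$, and then reads off all of (i)--(iii) at once from the spectral theorem for compact self-adjoint operators. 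What you gain is economy and conceptual clarity: completeness, finite multiplicity, and $\mu_{j}\to\infty$ all fall out of one theorem, and the orthogonality relations in both inner products are immediate from the single identity $\langle\varphi_{j},\varphi_{k}\rangle_{\partial}=\mu_{j}^{-1}\langle\varphi_{j},\varphi_{k}\rangle_{c}$. What the paper's approach buys is the explicit min--max characterization of each $\mu_{j}$, which is used later (e.g.\ in Corollary~1 and the splitting estimates \eqref{11}); your abstract spectral argument yields the same characterization a posteriori via Courant--Fischer, but the paper gets it constructively along the way.
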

\begin{proof}
We will prove the existence of a sequence of real eigenvalues ${\mu_j}$ and the eigenfunctions $\varphi_j$ corresponding to the eigenvalues $\mu_j$ that form an orthogonal 
family in $[H^{1}_{0}(\Omega)]^{\bot}$\\
We will define the functionals 
\begin{equation}\label{5}
\begin{gathered}
\mathbb{I}:H^{1}(\Omega)\to[0,\infty)~~by\\
~~~~ \mathbb{I}(u):=\int_{\Omega}|\triangledown u|^2+\int_{\Omega}c(x)u^2=||u||^{2}_{c},~\forall u\in H^{1}(\Omega)\\
and\\ ~~\mathbb{J}:H^{1}(\Omega)\to[-1,\infty)~~ by\\
 \mathbb{J}(u):=\int_{\partial\Omega}u^2-1=||u||^{2}_{\partial}-1,   ~\forall u\in H^{1}(\Omega)
\end{gathered}
\end{equation}
Clearly $\mathbb{I}$ and $\mathbb{J}$ are $C^{1}-$ functional (i.e.; continuous differentiable) 
We compute $\mathbb{I}'(u)v~~\forall u,v\in H^{1}(\Omega)$  
\begin{equation}\label{5}
\begin{gathered}
\lim_{t\to 0}t^{-1}[\mathbb{I}(u+tv)-\mathbb{I}(u)]=\\
\lim_{t\to 0}t^{-1}\left[\int_{\Omega}|\triangledown u+tv|^2+\int_{\Omega}c(x)(u+tv)^2-\int_{\Omega}|\triangledown u|^2-\int_{\Omega}c(x)u^2\right]=\\
\small{\lim_{t\to 0}t^{-1}\left( \int_{\Omega}(|\triangledown u|^2+2t\triangledown u\triangledown v+t^2|\triangledown v|^2)+\int_{\Omega}(c(x)u^2+2tuv+t^2v^2)-
\int_{\Omega}|\triangledown u|^2-\int_{\Omega}c(x)u^2\right)}\\
\therefore\mathbb{I}'(u)v=2\left[\int_{\Omega}\triangledown u.\triangledown v+\int_{\Omega}c(x)uv\right]~~\forall u,v\in H^{1}(\Omega) \\
\end{gathered}
\end{equation}
Now you compute $\mathbb{J}'(u)v~~\forall u,v\in H^{1}(\Omega)$
\begin{equation}\label{6}
\begin{gathered}
\lim_{t\to 0}t^{-1}[\mathbb{J}(u+tv)-\mathbb{J}(u)]=\\
\lim_{t\to 0}t^{-1}\left[\int_{\partial\Omega}(u+tv)^2-\int_{\partial\Omega}u^2\right]=\\
\lim_{t\to 0}t^{-1}\left[\int_{\partial\Omega}u^2+2t\int_{\partial\Omega}uv+t^2\int_{\partial\Omega}v^2 -\int_{\partial\Omega}u^2\right]\\
\therefore\mathbb{J}'(u)v=2\int_{\partial\Omega}uv ~~\forall u,v\in H^{1}(\Omega)
\end{gathered}
\end{equation}
\begin{flushleft}
Claim 
\end{flushleft}
$\mathbb{I}'(u)v$ and $\mathbb{J}'(u)v$ are continuous functionals
\begin{proof} {Of the claim}
{\color{red}Let $u_{m}\to u$ in $H^{1}(\Omega)$,  we will show that $||\mathbb{I}'(u_m)-\mathbb{I}'(u)||_{\mathbb{L}(H^{1}(\Omega),\mathbb{R})}\to 0$,  as $m\to\infty$,  and
 $||\mathbb{J}'(u_m)-\mathbb{J}'(u)||_{\mathbb{L}(H^{1}(\Omega),\mathbb{R})}\to 0$, as $m\to\infty$, where ${\mathbb{L}(H^{1}(\Omega),\mathbb{R})}$  the set of all continuous functional from
$H^{1}(\Omega)$   to $\mathbb{R}$ since we know that 
$$||\mathbb{I}'(u_m)-\mathbb{I}'(u)||_{\mathbb{L}(H^{1}(\Omega),\mathbb{R})}=\displaystyle{\sup_{||v||_{c}=1}|\mathbb{I}'(u_m)v-\mathbb{I}'(u)v|,~~\forall u,v \in H^{1}(\Omega)}$$
$$|\mathbb{I}'(u_m)v-\mathbb{I}'(u)v|=2\left|\int_{\Omega}\triangledown u_m.\triangledown v+\int_{\Omega}c(x)u_mv-\int_{\Omega}\triangledown u.\triangledown v+\int_{\Omega}c(x)uv\right|$$
$$|\mathbb{I}'(u_m)v-\mathbb{I}'(u)v|\leq 2\left[\int_{\Omega}|\triangledown u_m-\triangledown u|.|\triangledown v|+\int_{\Omega}\sqrt{c(x)}\sqrt{c(x)}|u_m-u||v|\right]$$
$$\leq^{ Holder~~ inequality} 2\left[\left(\int_{\Omega}|\triangledown u_m-\triangledown u|^2\right)^{\frac{1}{2}}\left(\int_{\Omega}|\triangledown v|^2\right)^{\frac{1}{2}}+\left(\int_{\Omega}c(x)|u_m-u|^{2}\right)^{\frac{1}{2}}\left(\int_{\Omega}c(x)|v|^2\right)^{\frac{1}{2}}\right]$$
$$\leq 2||u_m-u||_{c}||v||_{c}\to 0 ~~as~~ m\to\infty$$ 
then, $||\mathbb{I}'(u_m)-\mathbb{I}'(u)||_{\mathbb{L}(H^{1}(\Omega),\mathbb{R})}\to 0$, as $m\to\infty$, 
so that $\mathbb{I}'(u)v$ is continuous functional.}

{\color{blue}Let $v_{m}\to v$ in $H^{1}(\Omega)$,  we will show that $|\mathbb{I}'(u)v_m-\mathbb{I}'(u)v|\to 0$,  as $m\to\infty$,  and
 $|\mathbb{J}'(u)v_m-\mathbb{J}'(u)v|\to 0$, as $m\to\infty$, 
 since we know that 
$$|\mathbb{I}'(u)v_m-\mathbb{I}'(u)v|=2\left|\int_{\Omega}\triangledown u.\triangledown v_m+\int_{\Omega}c(x)uv_m-\int_{\Omega}\triangledown u.\triangledown v+\int_{\Omega}c(x)uv\right|$$
$$|\mathbb{I}'(u)v_m-\mathbb{I}'(u)v|\leq 2\left[\int_{\Omega}|\triangledown v_m-\triangledown v|.|\triangledown u|+\int_{\Omega}\sqrt{c(x)}\sqrt{c(x)}|v_m-v||u|\right]$$
$$\leq^{ Holder~~ inequality} 2\left[\left(\int_{\Omega}|\triangledown v_m-\triangledown v|^2\right)^{\frac{1}{2}}\left(\int_{\Omega}|\triangledown u|^2\right)^{\frac{1}{2}}+\left(\int_{\Omega}c(x)|v_m-v|^{2}\right)^{\frac{1}{2}}\left(\int_{\Omega}c(x)|u|^2\right)^{\frac{1}{2}}\right]$$
$$\leq 2||v_m-v||_{c}||u||_{c}\to 0 ~~as~~ m\to\infty$$ 
then, $|\mathbb{I}'(u)v_m-\mathbb{I}'(u)v|\to 0$, as $m\to\infty$, 
so that $\mathbb{I}'(u)v$ is continuous functional.
similar argument we can prove that $\mathbb{J}'(u)v$ is  continuous functional. }
\end{proof}
This implies that $\mathbb{I},~~and~~\mathbb{J}$ are $C^{1}-$ functionals
 \end{proof}
We know that $\mathbb{I}$ is convex 
we will proof that 
$\forall t\in(0,1)$ and $\forall~u,v\in H^{1}(\Omega) $ 
we have that 

$$ \mathbb{I}(tu+(1-t)v)=||tu+(1-t)v||^{2}_{c}\leq \left(||tu||_{c}+||(1-t)v||_{c}\right)^{2} $$
$$\leq t^{2}||u||^{2}_{c}+2t(1-t)||u||_{c}||v||_{c}+(1-t)^2||v||^{2}_{c}\leq$$
$$t^{2}||u||^{2}_{c}+t(1-t)(||u||^{2}_{c}+||v||^{2}_{c})+(1-t)^2||v||^{2}_{c}\leq$$
$$t^{2}||u||^{2}_{c}+t||u||^{2}_{c}-t^{2}||u||^{2}_{c}+t||v||^{2}_{c}-t^{2}|v||^{2}+||v||^{2}_{c}-2t||v||^{2}_{c}+t^2||v||^{2}_{c}$$
 $$=t||u||^{2}_{c}+(1-t)||v||^{2}_{c}=t\mathbb{I}(u)+(1-t)\mathbb{I}(v)$$
So that $\mathbb{I}$ is convex functional.\\
We know that $\mathbb{I}$ is G-differentiable. \\
\textbf{ Claim:-}
 $\forall u,v\in H^{1}(\Omega)$ then $\mathbb{I}'(u)(v-u)\leq \mathbb{I}(v)-\mathbb{I}(u)$
\begin{proof}
IF $\mathbb{I}$ is convex, then 
$$\mathbb{I}(u+t(v-u))\leq\mathbb{I}(u)+t(\mathbb{I}(v)-\mathbb{I}(u))~~\forall u,v\in H^{1}(\Omega) ~\forall t\in (0,1) $$
$$\frac{\mathbb{I}(u+t(v-u))-\mathbb{I}(u)}{t}\leq \mathbb{I}(v)-\mathbb{I}(u) ~~\forall u,v\in H^{1}(\Omega) ~\forall t\in (0,1) $$
$$\lim_{t\to 0}\frac{\mathbb{I}(u+t(v-u))-\mathbb{I}(u)}{t}\leq \mathbb{I}(v)-\mathbb{I}(u) ~~\forall u,v\in H^{1}(\Omega) ~\forall t\in (0,1) $$
 so we have that 
$$\mathbb{I}'(u)(v-u)\leq \mathbb{I}(v)-\mathbb{I}(u)~~\forall u,v\in H^{1}(\Omega)$$
\end{proof}
\begin{theorem}
  Let $\mathbb{I}$ be G-differentiable and convex, then $\mathbb{I}$ is weakly lower-semi-continuous
\end{theorem}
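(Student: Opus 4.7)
The plan is to combine the convex subgradient inequality (which was just verified as a Claim immediately above) with the definition of weak convergence. Since G-differentiability in this paper includes the requirement that $\mathbb{I}'(u)$ is a continuous linear functional on $H^{1}(\Omega)$, the argument reduces to three short moves.

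First, I would fix $u\in H^{1}(\Omega)$ and consider an arbitrary sequence $u_{m}\rightharpoonup u$ weakly in $H^{1}(\Omega)$; the goal is to show $\mathbb{I}(u)\leq\liminf_{m\to\infty}\mathbb{I}(u_{m})$. Second, I would invoke the subgradient inequality from the preceding Claim, applied with the pair $(u,u_{m})$, to obtain
\begin{equation*}
\mathbb{I}(u_{m})\;\geq\;\mathbb{I}(u)+\mathbb{I}'(u)(u_{m}-u)\qquad\text{for every }m.
\end{equation*}
Third, since $\mathbb{I}'(u)\in\mathbb{L}(H^{1}(\Omega),\mathbb{R})$, the very definition of weak convergence yields $\mathbb{I}'(u)(u_{m}-u)\to 0$. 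Taking $\liminf_{m\to\infty}$ on both sides then gives $\liminf_{m\to\infty}\mathbb{I}(u_{m})\geq \mathbb{I}(u)$, which is weak lower semicontinuity.

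There is no real obstacle here, since the subgradient inequality $\mathbb{I}'(u)(v-u)\leq\mathbb{I}(v)-\mathbb{I}(u)$ has already been established, and the continuity of $\mathbb{I}'(u)$ as a linear functional is part of the hypothesis. The only point worth flagging is that the theorem is really a general fact about convex G-differentiable functionals on a Banach space and does not use any specific feature of $\mathbb{I}$ beyond what was just proved; in particular it does not require the quadratic structure $\mathbb{I}(u)=\|u\|_{c}^{2}$. If one wished to present the result without the linearity of $\mathbb{I}'(u)$, one could alternatively use the sequential Mazur lemma: a weakly convergent sequence has convex combinations converging strongly, and strong continuity of $\mathbb{I}$ together with convexity would give the same conclusion; but the subgradient route above is the cleanest given what has already been proved in the paper.
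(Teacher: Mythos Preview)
Your proof is correct and follows essentially the same route as the paper: both apply the subgradient inequality $\mathbb{I}'(u)(u_{m}-u)\leq\mathbb{I}(u_{m})-\mathbb{I}(u)$ from the preceding Claim, use that $\mathbb{I}'(u)$ is a continuous linear functional so weak convergence forces $\mathbb{I}'(u)(u_{m}-u)\to 0$, and then take $\liminf$. Your phrasing is in fact slightly cleaner in identifying that it is the linearity and continuity of $\mathbb{I}'(u)$ (built into the definition of G-differentiability), rather than continuity of $u\mapsto\mathbb{I}'(u)$, that makes the limit step work.
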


\begin{proof}
 Since we have $u_{n}\rightharpoonup u $ in $H^{1}(\Omega)$ 
since $\mathbb{I}'$ is continuous then, $\lim_{n\to\infty}\mathbb{I}'(u)(u_{n})=\mathbb{I}'(u)(u)$
by the claim a above we have that 
$$\mathbb{I}'(u)(u_n-u)\leq \mathbb{I}(u_n)-\mathbb{I}(u)$$
so we have 
$$\liminf_{n\to\infty}\mathbb{I}'(u)(u_n-u)\leq \liminf_{n\to\infty}\left(\mathbb{I}(u_n)-\mathbb{I}(u)\right)$$
since the limit of the left hand side exist and equal zero then we have that 
$$0\leq\liminf_{n\to\infty}\left(\mathbb{I}(u_n)-\mathbb{I}(u)\right)$$
so we have 
$$\mathbb{I}(u) \leq \liminf_{n\to\infty}\mathbb{I}(u_n)$$
therefore $\mathbb{I}$ is weakly lower-semi-conditions
\end{proof}
When $N=2$ we know that $H^{1}(\Omega)\hookrightarrow L^{q}(\Omega)$ when $q\in [2,\infty)$
let $u\in H^{1}(\Omega)$ then $u\in L^{q}(\Omega)$ when $q\in [2,\infty)$, by {H\"{o}lder ~~inequality}
$$\int_{\Omega}c(x)u^2\leq \left(\int_{\Omega}c(x)^{p}\right)^\frac{1}{p}\left(\int_{\Omega}u^{2r}\right)^\frac{1}{r}$$ 
 where $\frac{1}{p}+\frac{1}{r}=1$  so $2r=q\Rightarrow r=\frac{q}{2} ~ \forall q\in[2,\infty)$ 
then $1\leq r<\infty$
and $p=\frac{r}{r-1}>1$ \\
For $u,v\in H^{1}(\Omega)$. Now we show that $\mathbb{I}$ attains its minimum on the constraint set 
$W_0=\{u\in [H^{1}_{0}(\Omega)]^{\bot}:\mathbb{J}(u)=0\}.$
Let $\alpha=\displaystyle\inf_{u\in W_{0}}\mathbb{I}(u)$, by using the continuity of the trace operator, the Sobolev embedding theorem and the lower semi-continuity 
of $\mathbb{I}$\\ Let $\{u_{n}\}_{n\geq 1}$ be a minimizing sequence in $W_0$ for $\mathbb{I}$ 
since $\lim_{n\to\infty}\mathbb{I}(u_n)=\alpha$, we know  that  $\mathbb{I}(u_{n})=||u_n||^{2}_{c}$ by the definition of $\alpha$ we have that 
for all sufficiently large $n$, and for all $\epsilon >0$, then $||u_n||^{2}_{c}\leq \alpha+\epsilon$ by using the equivalent norm we have that 
there is exist $\beta$ such that 
$$||u_n||^{2}_{H^{1}(\Omega)}\leq\beta ||u_n||^{2}_{c}$$ so we have that 

$$||u_n||^{2}_{H^{1}(\Omega)}\leq\beta ||u_n||^{2}_{c}\leq\beta(\alpha+\epsilon),$$
so this sequence is bounded in $H^{1}(\Omega)$. Thus it has a weakly convergent subsequence $\{u_{n_j}:j\geq 1\}$ which convergent 
weakly to limit $\hat{u}$ in $H^{1}(\Omega)$. From Rellich's theorem this subsequence convergent strongly to $\hat{u}$ in $L^{2}(\Omega)$
so $\hat{u}$ in $W_0$. Thus $\mathbb{I}(\hat{u})=\alpha$ as the functional is weakly l.s.c.\\

 Then there exists $\varphi_1$ such that $\mathbb{I}(\varphi_1)=\alpha$.  Hence, $\mathbb{I}$ attains
 its minimum at $\varphi_1$ and $\varphi_1$ satisfies the following 
\begin{equation}\label{8}
\begin{gathered}
 \int_{\Omega}\triangledown \varphi_1\triangledown v+\int_{\Omega}c(x)\varphi_{1}v=\mu_{1}\int_{\partial\Omega}\varphi_{1}v
\end{gathered}
\end{equation}
For all $v\in[H^{1}_{0}(\Omega)]^{\bot} $ We see that $(\mu_{1},\varphi_{1})$ satisfies (\ref{3}) and $\varphi_{1} \in W$ this implies that 
$\varphi_{1}\in [H^{1}_{0}(\Omega)]^{\bot} $ by the definition of $W$, Now take $v=\varphi_{1}$ in (\ref{8}), we obtain that the eigenvalue
$\mu_{1}$ is the infimum $\alpha=\mathbb{I}(\varphi_{1})=\mu_{1}$. This means that we could define $\mu_{1}$ by Rayleigh quotient 
$$\mu_{1}=\inf_{\substack{u\in H^{1}(\Omega)\\u\neq 0}}\frac{\mathbb{I}(u)}{||u||^{2}_{\partial}}$$
Clearly, $\mu_{1}=\mathbb{I}(\varphi_{1})\geq 0$. Indeed  assume that $\mathbb{I}(\varphi_{1})=0$ then $|\triangledown \varphi_{1}|=0$ on $\Omega$
, hence $\varphi_{1}$ must be a constant that contradicts the assumptions imposed on $c(x)$. Thus $\mu_{1} >0$. \\

Now we show the existence of higher eigenvalues. \\
Define 
$$\mathbb{F}:W_{0}\to\mathbb{R}~by~\mathbb{F}(u)=<u,\varphi_{1}>_{\partial}$$ 
we know that the kernel of $\mathbb{F}$ 
$$ker\mathbb{F}=\{u\in W_{0}:\mathbb{F}(u)=0,~ i.e.; <u,\varphi_{1}>_{\partial}=0\}=:W_{1}.$$
Since $W_{1}$ is the null-space of the continuous functional $<.,\varphi_{1}>_{\partial}$ on $[H^{1}_{0}(\Omega)]^{\bot},$ 
$W_{1}$ is a closed subspace of $[H^{1}_{0}(\Omega)]^{\bot}$, and it is therefore a Hilbert space itself under the same inner product 
$<.,.>_{c}$. Now we define 
$$\mu_{2}=\inf\{\mathbb{I}(u):u\in W_{1}\}=\inf_{\substack{u\in W_{1}\\u\neq 0}}\frac{\mathbb{I}(u)}{||u||^{2}_{\partial}}$$
Since $W_{1}\subset W_0$ then we have that $\mu_{1}\leq\mu_{2}$. Moreover, we can repeat the above arguments to show that 
$\mu_{2}$ is achieved at some $\varphi_{2}\in [H^{1}_{0}(\Omega)]^{\bot}.$ \\
We let 
$$W_{2}=\{u\in W_{1}:<u,\varphi_{2}>_{\partial}=0\},$$
and 
$$\mu_{3}=\inf\{\mathbb{I}(u):u\in W_{2}\}=\inf_{\substack{u\in W{2}\\u\neq 0}}\frac{\mathbb{I}(u)}{||u||^{2}_{\partial}}$$
Since $W_{2}\subset W_{1}$ then we have that $\mu_{2}\leq\mu_{3}$. Moreover, we can repeat the above arguments to show that 
$\mu_{3}$ is achieved at some $\varphi_{3}\in [H^{1}_{0}(\Omega)]^{\bot}.$ \\
Proceeding inductively, we let 
$$W_{j}=\{u\in W_{j-1}:<u,\varphi_{j}>_{\partial}=0\},~\forall j\in\mathbb{N}$$ 
and 
$$\mu_{j+1}=\inf\{\mathbb{I}(u):u\in W_{j}\}=\inf_{\substack{u\in W{j}\\u\neq 0}}\frac{\mathbb{I}(u)}{||u||^{2}_{\partial}}$$
In this way, we generate a sequence of eigenvalues 
$$0<\mu_{1}\leq\mu_{2}\leq\mu_{3}\leq\ldots\leq\mu_{j}\leq\ldots$$
whose associated $\varphi_{j}$ are $c-$orthogonal and $\partial-$orthonormal in $[H^{1}_{0}(\Omega)]^{\bot}$ \\ 
\textbf{Claim 1} $\mu_{j}\to\infty$ as $j\to\infty$
\begin{proof}
 Suppose by contradiction that the sequence is bounded above by constant. Therefore, the corresponding sequence of eigenfunctions $\varphi_{j}$
is bounded in $H^{1}(\Omega)$ (i.e.; by the definition of the limit at $\infty$ $\forall M>0,~\exists N>0$ such that $|\varphi_{j}|>M,$ whenever $j>N$, 
the  ingation of the stetment $\exists M>0$ such that $|\varphi_{j}|\leq M~\forall j$). 
By Rellich-Kondrachov theorem and the compactness of the trace operator, there is a Cauchy subsequence (which we again denote by $\varphi_{j}$
such that 
\begin{equation}\label{9}
\begin{gathered}
||\varphi_{j}-\varphi_{k}||^{2}_{\partial}\to 0.
\end{gathered}
\end{equation}

Since the $\varphi_{j}$ are $\partial-$orthonormal, we have that 
$||\varphi_{j}-\varphi_{k}||^{2}_{\partial}=||\varphi_{j}||^{2}_{\partial}+||\varphi_{k}||^{2}_{\partial}=2>0$, 
if$j\neq k,$ which contradicts 
(\ref{9}). Thus, $\mu_{j}\to\infty.$ we have that each $\mu_{j}$ accurs only finitely many times. 
\end{proof}
\textbf{Claim 2} Each eigenvalue $\mu_{j}$ has a finite-dimensional eigenspace. \\ 
\begin{proof}
 Suppose by contradiction that each eigenvalue $\mu_{j}$ has infinite-dimensional eigenspace. 
let $\mu$ has corresponding sequence of eigenfunctions $\{\varphi_{1},\varphi_{2},...,\varphi_{j},...\}$ 
we know that $\mu=||\varphi_{1}||^{2}_{c}=...=||\varphi_{j}||^{2}_{c}=...$, this contradicts  claim 1
therefore, each eigenvalue has a finite-dimensional eigenspace

\end{proof}
We will show that the normalized eigenfunctions provide a complete orthonormal basis of $[H^{1}_{0}(\Omega)]^{\bot}$. 
Let $$\psi_{j}=\frac{1}{\sqrt{\mu_{j}}}\varphi_{j},$$
so that $||\psi_{j}||^{2}_{c}=1$ \\
\textbf{Claim 3} 
The sequence $\{\psi_{j}\}_{j\geq1}$ is a maximal $c-$orthonormal family of $[H^{1}_{0}(\Omega)]^{\bot}$. (we know that  the set maximal $c-$orthonormal 
if and only if it is  complete orthonormal basis)
\begin{proof}
 Suppose by contradiction that the sequence $\{\psi_{j}\}_{j\geq1}$ is not maximal, then there exists a $\xi\in [H^{1}_{0}(\Omega)]^{\bot}$ and 
$\xi\not\in \{\psi_{j}\}_{j\geq1}$, 
such that $||\xi||^{2}_{c}=1$ and $<\xi,\psi_{j}>_{c}=0~\forall j$, i.e.;
 $$0=<\xi,\psi_{j}>_{c}=<\xi,\frac{1}{\sqrt{\mu_{j}}}\varphi_{j}>_{c}=$$$$\frac{1}{\sqrt{\mu_{j}}}<\xi,\varphi_{j}>_{c}=^{(~by~\ref{8})}\frac{\mu_{j}}{\sqrt{\mu_{j}}}<\xi,\varphi_{j}>_{\partial}=
\mu_{j}<\xi,\frac{1}{\sqrt{\mu_{j}}}\varphi_{j}>_{\partial}={\mu_{j}}<\xi,\psi_{j}>_{\partial},$$  since $\mu_{j}>0~\forall j$. Therefore 
$<\xi,\psi_{j}>_{\partial}=0$. We have that $\xi\in W_{j}~\forall j\geq 1$. It follows from the definition of $\mu_{j}$ that 
$$\mu_{j}\leq\frac{||\xi||^{2}_{c}}{||\xi||^{2}_{\partial}}=\frac{1}{||\xi||^{2}_{\partial}}~\forall ~j\geq 1.$$ 
Since we know from claim 1 that $\mu_{j}\to\infty$ we have that $||\xi||^{2}_{\partial}=0$, therefore $\xi=0$a.e in $\Omega$,
which condradicts  the definition of $\xi.$ Thus the sequence $\{\psi_{j}\}_{j\geq1}$ 
is a maximal $c-$orthonormal family of $[H^{1}_{0}(\Omega)]^{\bot},$ so the sequence $\{\psi_{j}\}_{j\geq1}$  provides a complete orthonormal basis of $[H^{1}_{0}(\Omega)]^{\bot};$
that is, for any $u\in[H^{1}_{0}(\Omega)]^{\bot}$,\\ \\
$u=\displaystyle\sum^{\infty}_{j=1}d_{j}\psi_{j}$  with $d_{j}=<u,\psi_{j}>_{c}=\frac{1}{\sqrt{\mu_{j}}}<u,\varphi_{j}>_{c},$ and $||u||^{2}_{c}=\displaystyle\sum^{\infty}_{j=1}|d_{j}|^2$\\ \\

$$u=\displaystyle\sum^{\infty}_{j=1}d_{j}\frac{1}{\sqrt{\mu_{j}}}\varphi_{j},$$ now let 
$$c_{j}=d_{j}\frac{1}{\sqrt{\mu_{j}}}=\frac{1}{\mu_{j}}<u,\varphi_{j}>_{c}=^{(\ref{8})}<u,\varphi_{j}>_{\partial}.$$
Therefore, 
$$u=\displaystyle\sum^{\infty}_{j=1}c_{j}\varphi_{j},$$ and 
$$||u||^{2}_{c}=\displaystyle\sum^{\infty}_{j=1}|c_{j}|^{2}||\varphi_{j}||^{2}_{c}=\displaystyle\sum^{\infty}_{j=1}\mu_{j}|c_{j}|^{2}$$
\end{proof}
\textbf{Claim 4} 
We shall show that 
$$||u||^{2}_{\partial}=\displaystyle\sum^{\infty}_{j=1}|c_{j}|^{2}$$
\begin{proof}
$$||u||^{2}_{\partial}=<u,u>_{\partial}=
<\displaystyle\sum^{\infty}_{j=1}c_{j}\varphi_{j},\displaystyle\sum^{\infty}_{k=1}c_{k}\varphi_{k}>_{\partial}
=\displaystyle\sum^{\infty}_{j=1}c_{j}\displaystyle\sum^{\infty}_{k=1}c_{k}<\varphi_{j},\varphi_{k}>_{\partial}
=\displaystyle\sum^{\infty}_{j=1}|c_{j}|^{2}.$$
Thus $$||u||^{2}_{\partial}=\displaystyle\sum^{\infty}_{j=1}|c_{j}|^{2}$$
\end{proof}
The following result gives a variational chararcterization of the eigenvalues and a splitting of the space 
$[H^{1}_{0}(\Omega)]^{\bot}$ (and, hence, of $H^{1}(\Omega)$ which will be needed in the proofs of the result on nonlinear problems.\\ \\
\textbf{Corollary 1}
 Assume that $c$ satisfy the above condition. Then we have the following.
\begin{description}
 \item[i] For all $u\in H^{1}(\Omega) ,$
\begin{equation}\label{10}
\begin{gathered}
\mu_{1}\int_{\partial\Omega}u^2\leq\int_{\Omega}|\triangledown u|^2+\int_{\Omega}c(x)u^2,
\end{gathered}
\end{equation}
where $\mu_{1} >0$ is the least Steklov eigenvalue for equation (\ref{E2}). If equality holds in (\ref{10}), then $u$ is a multiple of 
an eigenfunction of equation (\ref{E2}) corresponding to $\mu_{1}$
 \item[ii] For every $v\in\oplus_{i\leq j}E(\mu_{i}),$ and $w\in\oplus_{i\geq j+1}E(\mu_{i}),$ we have that 
\begin{equation}\label{11}
\begin{gathered}
||v||^{2}_{c}\leq\mu_{j}||v||^{2}_{\partial} ~~and~~||w||^{2}_{c}\geq\mu_{j+1}||w||^{2}_{\partial} 
\end{gathered}
\end{equation}
where $E(\mu_{i})$ is the $\mu_{i}$-eigenspace and $\oplus_{i\leq j}E(\mu_{i})$ is span of the eigenfunctions associated 
to eigenvalues up to $\mu_{j}$
\end{description}
\begin{proof}
 If $u=0$, then the inequality (\ref{10}) holds. otherwise, if  $0\neq u\in H^{1}(\Omega),$
then $u=u_{1}+u_{2},$ where $u_{1}\in[H^{1}_{0}(\Omega)]^{\bot} $, and $u_{2}\in H^{1}_{0}(\Omega).$ Therefore, by the $c-$orthogonality, and the characterization of
 $\mu_{1}$ (i.e.; $\mu_{1}||u_{1}||^{2}_{\partial}\leq||u_{1}||^{2}_{c})$ we get that 
$$\mu_{1}||u||^{2}_{\partial}=\mu_{1}(||u_{1}||^{2}_{\partial}+||u_{2}||^{2}_{\partial}\leq||u_{1}||^{2}_{c}+||u_{2}||^{2}_{c}=||u||^{2}_{c}$$.
Thus, the inequality (\ref{10}) holds. \\
Now assume we have that 
$$||u||^{2}_{c}=\mu_{1}||u||^{2}_{\partial}\implies\mu_{1}= \frac{||u||^{2}_{c}}{||u||^{2}_{\partial}}$$
we know that $\mu_{1}=\frac{||\varphi_{1}||^{2}_{c}}{||\varphi_{1}||^{2}_{\partial}}$ where $\varphi_{1}$ the eigenfunction corresponding to $\mu_{1}$,
therefore, $u$ is a multiple of an eigenfunction of equation (\ref{E2}) corresponding to $\mu_{1}$\\
The inequalities (\ref{11}) by $\ref{th}$ we have that 
$$||v||^{2}_{c}=\displaystyle\sum^{\infty}_{j=1}\mu_j|c_j|^2~~\forall v\in\oplus_{i\leq j}E(\mu_{i})$$.
Now let  $\mu_{j}=\max\mu~\forall i\leq j,$ then we have that 
$$||v||^{2}_{c}=\displaystyle\sum^{\infty}_{j=1}\mu_j|c_j|^2\leq\max\mu\displaystyle\sum^{\infty}_{j=1}|c_j|^2=\mu_{j}||v||^{2}_{\partial}~\forall~v\in\oplus_{i\leq j}E(\mu_{i})$$

$$||w||^{2}_{c}=\displaystyle\sum^{\infty}_{j=1}\mu_j|c_j|^2~~\forall v\in\oplus_{i\leq j}E(\mu_{i})$$.
Now let  $\mu_{j+1}=\min\mu~\forall i\geq j+1,$ then we have that 
$$||w||^{2}_{c}=\displaystyle\sum^{\infty}_{j=1}\mu_j|c_j|^2\geq\min\mu\displaystyle\sum^{\infty}_{j=1}|c_j|^2=\mu_{j+1}||w||^{2}_{\partial}~\forall~w\in\oplus_{i\geq j+1}E(\mu_{i})$$

\end{proof}
The following proposition shows the principality of the first eigenvalue $\mu_{1}.$
\begin{proposition}\label{p1}
The first eigenvalue $\mu_{1}$ is simple if and only if the associated eigenfunction $\varphi_{1}$ does not changes sign (i.e.; 
$\varphi_{1}$ is strictly positive or strictly negative in $\Omega$ .
\end{proposition}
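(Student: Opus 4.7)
The plan rests on two preliminary observations that I would make explicit before splitting into the two implications. \emph{Observation 1:} if $\varphi$ attains the infimum defining $\mu_1$, then so does $|\varphi|$. Indeed, for $H^1$ functions $|\nabla|\varphi||=|\nabla\varphi|$ almost everywhere, $c(x)|\varphi|^2=c(x)\varphi^2$, and the boundary integral $\|\,|\varphi|\,\|_\partial^2=\|\varphi\|_\partial^2$, so $\mathbb{I}(|\varphi|)/\|\,|\varphi|\,\|_\partial^2=\mathbb{I}(\varphi)/\|\varphi\|_\partial^2=\mu_1$. Hence $|\varphi|$ is itself a $\mu_1$-eigenfunction. \emph{Observation 2:} every $\mu_1$-eigenfunction $\varphi\ge 0$ with $\varphi\not\equiv 0$ is in fact strictly positive on $\overline\Omega$. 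To see this, one uses standard elliptic regularity (permissible because $c\in L^p(\Omega)$ with $p\ge N$) to obtain $\varphi\in W^{2,p}(\Omega)\cap C(\overline\Omega)$ solving $-\Delta\varphi+c(x)\varphi=0$ classically in $\Omega$ together with $\partial\varphi/\partial\nu=\mu_1\varphi$ on $\partial\Omega$. The strong maximum principle for the operator $-\Delta+c(x)$ (whose zeroth-order coefficient satisfies $c\ge 0$) forces $\varphi>0$ in $\Omega$; at any boundary point where $\varphi=0$, Hopf's boundary point lemma gives $\partial\varphi/\partial\nu<0$, contradicting $\partial\varphi/\partial\nu=\mu_1\varphi=0$. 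Therefore $\varphi>0$ on $\overline\Omega$.

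For the forward direction (simplicity $\Rightarrow$ constant sign), I assume $\mu_1$ is simple and take the normalized eigenfunction $\varphi_1$. By Observation~1, $|\varphi_1|$ is also a $\mu_1$-eigenfunction, so by simplicity $|\varphi_1|=\lambda\varphi_1$ for some scalar $\lambda\in\mathbb{R}$. This equality forces $\varphi_1$ to have constant sign throughout $\Omega$, and then by Observation~2 applied to $\pm\varphi_1$, the sign is strict.

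For the reverse direction (constant sign $\Rightarrow$ simplicity), WLOG $\varphi_1\ge 0$, and hence $\varphi_1>0$ on $\overline\Omega$ by Observation~2. Suppose for contradiction there exists a $\mu_1$-eigenfunction $\tilde\varphi$ linearly independent from $\varphi_1$. Replacing $\tilde\varphi$ by $\tilde\varphi-\bigl(\langle\tilde\varphi,\varphi_1\rangle_\partial/\|\varphi_1\|_\partial^2\bigr)\varphi_1$, I may assume $\langle\tilde\varphi,\varphi_1\rangle_\partial=0$ with $\tilde\varphi\not\equiv 0$. Since $\varphi_1>0$ on $\partial\Omega$, the orthogonality forces $\tilde\varphi$ to change sign on $\partial\Omega$. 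On the other hand, Observation~1 says $|\tilde\varphi|$ is a $\mu_1$-eigenfunction, and Observation~2 then yields $|\tilde\varphi|>0$ on $\overline\Omega$, so $\tilde\varphi$ itself has constant (nonzero) sign on $\overline\Omega$, contradicting the previous line. Therefore the eigenspace of $\mu_1$ is one-dimensional.

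The main obstacle is Observation~2: the body of the paper so far only produces eigenfunctions as variational minimizers in $H^1(\Omega)$, so before applying the strong maximum principle and the Hopf boundary point lemma one must invoke (or briefly justify) elliptic regularity up to the boundary for the Robin-type problem \eqref{E2}, together with the $C^2$ smoothness of $\partial\Omega$ that allows Hopf's lemma to be applied. Observation~1 and the algebraic manipulations in both implications are routine by comparison.
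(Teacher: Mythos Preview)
Your argument is correct, but it follows a genuinely different route from the paper's. For the forward direction, the paper supposes $\varphi_1$ changes sign and shows that \emph{both} $\varphi_1^+$ and $\varphi_1^-$ realize the infimum $\mu_1$ (via the identity $\langle\varphi_1^+,\varphi_1^+\rangle_c+\langle\varphi_1^-,\varphi_1^-\rangle_c=\langle\varphi_1,\varphi_1\rangle_c$ and the analogous boundary identity), giving two linearly independent eigenfunctions and contradicting simplicity. You instead pass to $|\varphi_1|$ and use simplicity to force $|\varphi_1|=\lambda\varphi_1$. For the reverse direction, the paper takes two positive independent eigenfunctions $\varphi,\psi$, chooses $a$ so that $\int(\varphi+a\psi)=0$, and concludes that the eigenfunction $\varphi+a\psi$ must change sign (else it would vanish identically, contradicting independence). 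Your route, by contrast, produces a $\partial$-orthogonal eigenfunction $\tilde\varphi$, observes it must change sign on $\partial\Omega$ because $\varphi_1>0$ there, and then derives a contradiction from the strict positivity of $|\tilde\varphi|$.

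The practical difference is that the paper's argument stays entirely at the variational $H^1$ level and uses only the splitting into $\varphi^\pm$, so it establishes ``does not change sign'' without invoking regularity, the strong maximum principle, or Hopf's lemma; strict positivity on $\overline\Omega$ is deferred to a separate remark. Your approach is cleaner and yields strict positivity as part of the package, but it leans on your Observation~2, which---as you correctly flag---requires elliptic regularity up to the boundary for the Steklov-type problem and the boundary point lemma. Both are available under the paper's hypotheses ($c\in L^p$ with $p\ge N$, $\partial\Omega$ of class $C^2$), so your proof goes through; just be aware that the paper deliberately avoids that machinery in the proof itself.
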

\begin{proof}
 Assume that the first eigenvalue $\mu_{1}$ is simple, we will show that  associated eigenfunction $\varphi_{1}$ does not changes sign in $\Omega$, suppose it does and 
let $\varphi_{1}=\varphi^{+}_{1}+\varphi^{-}_{1},$ where $\varphi^{+}_{1}=\max\{\varphi_{1},0\},$ and $\varphi^{-}_{1}=\min\{0,\varphi_{1}\}$\\ 
If $\varphi_{1}\in H^{1}(\Omega)$. Then $\varphi^{+}_{1},\varphi^{-}_{1}\in H^{1}(\Omega)$
proof of that we know that $\varphi^{+}_{1}=\frac{1}{2}(\varphi_{1}+|\varphi_{1}|)$
clearly $\varphi^{+}_{1}\in L^{2}(\Omega)$, define 
$$V_{\epsilon}=(\varphi_{1}^{2}+\epsilon^{2})^{\frac{1}{2}}-\epsilon$$ 
$$|\varphi_{1}|=\displaystyle\lim_{\epsilon\to 0}V_{\epsilon},$$ we will show that 
$$D^{i}V_{\epsilon}=\frac{\varphi_{1}}{(\varphi_{1}^{2}+\epsilon^{2})^{\frac{1}{2}}}D_{i}\varphi_{1}\longrightarrow^{L^{2}(\Omega)}~sign D^{i}\varphi_{1}$$
$\forall\epsilon >0$, then $0 \leq V_{\epsilon}\leq |\varphi_{1}|$
since
 $$V_{\epsilon}^{2}=\left((\varphi_{1}^{2}+\epsilon^{2})^{\frac{1}{2}}-\epsilon\right)^{2}=\varphi_{1}^{2}+\epsilon^{2}-2(\varphi_{1}^{2}+\epsilon^{2})^{\frac{1}{2}}\epsilon+\epsilon^{2}
=\varphi_{1}^{2}+2\epsilon(\epsilon-(\varphi_{1}^{2}+\epsilon^{2})^{\frac{1}{2}}\leq\varphi_{1}^{2},$$
therefore  $V_{\epsilon}\leq |\varphi_{1}|$, 
$$\displaystyle\lim_{\epsilon\to 0}||\frac{\varphi_{1}}{(\varphi_{1}^{2}+\epsilon^{2})^{\frac{1}{2}}}D_{i}\varphi_{1}-sign D^{i}\varphi_{1}||_{L^{2}(\Omega)}=0$$
Therefore, 
$$\frac{\varphi_{1}}{(\varphi_{1}^{2}+\epsilon^{2})^{\frac{1}{2}}}D_{i}\varphi_{1}\longrightarrow^{L^{2}(\Omega)}~sign D^{i}\varphi_{1}$$

Thus, $\varphi_{1}^{+}\in H^{1}(\Omega)$, similar $\varphi_{1}^{-}\in H^{1}(\Omega)$\\
By the characterization of $\mu_{1}$ it follows that  
$$<\varphi_{1},\varphi_{1}>_{c}=\mu_{1}<\varphi_{1},\varphi_{1}>_{\partial},$$ since  $\varphi_{1}^{+}\in H^{1}(\Omega)$,and  $\varphi_{1}^{-}\in H^{1}(\Omega),$ we have that 
$$\mu_{1}<\varphi_{1}^{+},\varphi_{1}^{+}>_{\partial}\leq<\varphi_{1}^{+},\varphi_{1}^{+}>_{c},$$
$$\mu_{1}<\varphi_{1}^{-},\varphi_{1}^{-}>_{\partial}\leq<\varphi_{1}^{-},\varphi_{1}^{-}>_{c}.$$ Therefore
$$0\leq~ <\varphi_{1}^{+},\varphi_{1}^{+}>_{c}+<\varphi_{1}^{-},\varphi_{1}^{-}>_{c}- \mu_{1}<\varphi_{1}^{+},\varphi_{1}^{+}>_{\partial}
-\mu_{1}<\varphi_{1}^{-},\varphi_{1}^{-}>_{\partial}=$$$$
<\varphi_{1}^{+}+\varphi_{1}^{-},\varphi_{1}^{+}+\varphi_{1}^{-}>_{c}- \mu_{1}<\varphi_{1}^{+}+\varphi_{1}^{-},\varphi_{1}^{+}+\varphi_{1}^{-}>_{\partial}
=<\varphi_{1},\varphi_{1}>_{c}-\mu_{1}<\varphi_{1},\varphi_{1}>_{\partial}=0.$$  
It follows that $\varphi_{1}^{+},$ and $\varphi_{1}^{-}$ are also eigenfunctions corresponding to $\mu_{1}$ we have that
  $\varphi_{1}^{+}>0~a.e~in~\Omega,$ and $\varphi_{1}^{-}<0~a.e~in~\Omega,$ which is impossible since $\mu_{1}$ it is simple. 
Thus $\varphi_{1}$ does not change sign in $\Omega.$\\
Assume $\varphi_{1}$ change sig, then $\varphi_{1}^{+},$ and $\varphi_{1}^{-}$ are also eigenfunctions corresponding to $\mu_{1}$ and 
they are linearly independent. Hence, $\mu_{1}$  is not simple.
On the other hand, suppose that $\mu_{1}$ is not simple, and let $\varphi$ and $\psi$ be two eigenfunctions corresponding 
to $\mu_{1}$ they are linearly independent. If $\varphi$ or  $\psi$ changes sign, then the proposition is proved. Otherwis, 
supposing without loss of generality that $\varphi$ and   $\psi$ positive, we will prove that there exists $a\in\mathbb{R}$ such that 
the eigenfunction (corresponding to $\mu_{1}$) $\varphi+a\psi$ changes sign. Indeed, suppose that, for all $\alpha\in\mathbb{R},$ 
$\varphi+\alpha\psi$ does not change.\\
Let the function $h:\mathbb{R}\to\mathbb{R}$ be define by 
$$h(\alpha)=\int\varphi+\alpha\int\psi.$$
Since $h$ is continuous, there exists $a\in\mathbb{R}$ such that 
$$h(a)=\int\varphi+a\int\psi=0.$$
Hence,  which contradicts the fact  $\varphi$ and $\psi,$ are linearly independent. 
Thus, $\varphi+a\psi,$ changes sign. The proof is complete. 
\end{proof}
\begin{remark}
Note that if we have smooth data and $\partial\Omega$ in \ref{p1}, then the eigenfunction $\varphi_{1}(x)$ on $\partial\Omega$ as well, 
by the boundary point lemma (see for example Evans).
\end{remark}
\section{the main results}
 \begin{theorem}\label{thm1}
Assume that 
\begin{equation}\label{01}
\begin{gathered}
g(x,u)u\geq 0,
\end{gathered}
\end{equation}

for $a.e.;~x\in \partial\Omega$, and all $u\in\mathbb{R}.$ Moreover, suppose that for all constant $\sigma>0,$
there exist a constant $K=K(\sigma),$ and function $b=b(\sigma)\in L^{\infty}(\partial\Omega)$ such that

\begin{equation}\label{02}
\begin{gathered}
|g(x,u)|\leq(\Gamma(x)+\sigma)|u|+b(x),
\end{gathered}
\end{equation}

for $a.e.;~x\in \partial\Omega$, and all $u\in\mathbb{R},$ with $|u|\geq K,$ where 
$\Gamma\in L^{\infty}(\partial\Omega),$ such that for $a.e.;~x\in \partial\Omega$

\begin{equation}\label{03}
\begin{gathered}
0\leq\Gamma(x)\leq (\mu_{j+1}-\mu_{j}),~~j\in\mathbb{N}
\end{gathered}
\end{equation}

(where ($\mu_{j+1}-\mu_{j})$ the $({j+1})^th$ Steklov eigenvalue ($c=0$) ) with $\Gamma(x)<(\mu_2-\mu_{j}),$ on a subset of $\partial\Omega$ of positive measure. \\
Then, equation (\ref{E1}) has least one solution $u\in W^{2}_{p}(\Omega)$ for any $h\in L^{2}(\partial\Omega),$ with 

\begin{equation}\label{04}
\begin{gathered}
\int_{\partial\Omega}h(x)\varphi_j(x)dx=0
\end{gathered}
\end{equation}

where $\varphi_{j}$ the ${j^{th}}$ eigenfunction of (\ref{E2})
 \end{theorem}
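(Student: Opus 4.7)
The plan is to recast (\ref{E1}) as a fixed-point equation for a compact operator on $H^{1}(\Omega)$ and then apply Leray--Schauder continuation along a homotopy to a non-resonant reference problem. Define $K\colon H^{1}(\Omega)\to H^{1}(\Omega)$ by the Riesz representation
\[
\<K(u),\phi\>_{c}=\mu_{j}\int_{\partial\Omega}u\phi+\int_{\partial\Omega}g(x,u)\phi+\int_{\partial\Omega}h\phi,\qquad\phi\in H^{1}(\Omega).
\]
Compactness of $K$ is immediate from compactness of the trace $H^{1}(\Omega)\hookrightarrow L^{2}(\partial\Omega)$ together with the Carath\'eodory/local-bound hypothesis (\ref{00}); fixed points of $K$ are exactly the weak solutions of (\ref{E1}), and the hypothesis $c\in L^{p}(\Omega)$, $p\ge N$, upgrades them to $W^{2}_{p}(\Omega)$ by elliptic regularity. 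I would take as homotopy
\[
u=tK(u)+(1-t)K_{\lambda}(u),\qquad t\in[0,1],
\]
with $K_{\lambda}$ the linear map obtained by replacing $\mu_{j}$ by a fixed $\lambda\in(\mu_{j},\mu_{j+1})$ and dropping $g,h$; then the equation at $t=0$ has only the trivial solution, so the Leray--Schauder degree of $I-K_{\lambda}$ on any large ball is $\pm 1$.

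The crux is a uniform a priori $H^{1}$-bound on all possible fixed points of the homotopy, which I would establish by a blow-up argument. Suppose, to the contrary, that there exist $t_{n}\in[0,1]$ and $u_{n}$ solving the $t_{n}$-equation with $\|u_{n}\|_{c}\to\infty$, and set $v_{n}:=u_{n}/\|u_{n}\|_{c}$, so $\|v_{n}\|_{c}=1$. Along a subsequence $v_{n}\rightharpoonup v$ in $H^{1}(\Omega)$ and $v_{n}\to v$ strongly in $L^{2}(\partial\Omega)$ by compactness of the trace. Applying (\ref{02}) with $\sigma$ arbitrarily small (diagonally along subsequences) gives
\[
|g(x,u_{n})|/\|u_{n}\|_{c}\le(\Gamma(x)+\sigma)|v_{n}|+b(x)/\|u_{n}\|_{c},
\]
so weak compactness produces a measurable $G$ with $0\le G(x)\le\Gamma(x)$ a.e.\ and $g(x,u_{n})/\|u_{n}\|_{c}\rightharpoonup G(x)v(x)$ weakly in $L^{2}(\partial\Omega)$. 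Dividing the weak form of the $t_{n}$-equation by $\|u_{n}\|_{c}$, compactness of the trace upgrades weak to strong $H^{1}$-convergence of $v_{n}$ to $v$, and passage to the limit yields
\[
\<v,\phi\>_{c}=\int_{\partial\Omega}m(x)v\phi,\qquad\forall\phi\in H^{1}(\Omega),
\]
with weight $m(x):=t_{\infty}\mu_{j}+(1-t_{\infty})\lambda+t_{\infty}G(x)$, which satisfies $\mu_{j}\le m(x)\le\mu_{j+1}$ a.e., with strict inequality $m(x)<\mu_{j+1}$ on a set of positive measure by (\ref{03}), and $\|v\|_{c}=1$.

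Splitting $v=v_{-}+v_{0}+v_{+}$ according to the eigenspace decomposition of Theorem~\ref{th} and using the Rayleigh-type inequalities of Corollary~1 together with the strict spectral bound for $m$, testing the limit equation in the directions $v_{\pm}$ forces $v_{\pm}=0$ and $t_{\infty}=1$; hence $v=v_{0}\in E(\mu_{j})\setminus\{0\}$. To finish the contradiction I would test the $n$-th equation with $\varphi_{j}$: since $\<u_{n},\varphi_{j}\>_{c}=\mu_{j}\<u_{n},\varphi_{j}\>_{\partial}$ by the eigenvalue identity and $\int_{\partial\Omega}h\varphi_{j}=0$ by (\ref{04}), this reduces to
\[
t_{n}\int_{\partial\Omega}g(x,u_{n})\varphi_{j}=-(1-t_{n})(\lambda-\mu_{j})\<u_{n},\varphi_{j}\>_{\partial},
\]
whose right-hand side, after division by $\|u_{n}\|_{c}$, vanishes in the limit since $t_{n}\to 1$. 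On the other hand, $u_{n}(x)=\|u_{n}\|_{c}v_{n}(x)$ diverges to $\pm\infty$ pointwise a.e.\ on the positive-measure set $\{v_{0}\varphi_{j}\ne 0\}$; the sign condition (\ref{01}) makes $g(x,u_{n})\varphi_{j}\ge 0$ there for large $n$, and a Fatou-type argument combined with the integrable domination (\ref{02}) produces a strictly positive lower bound for the left-hand side along a subsequence, contradicting the identity above. With the a priori bound in hand, Leray--Schauder continuation from $t=0$ to $t=1$ delivers a fixed point of $K$, i.e., a solution of (\ref{E1}).

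The principal obstacle is precisely the final resonance-breaking step: without any Landesman--Lazer asymptotic condition, the only tool available is the one-sided pointwise inequality (\ref{01}), and squeezing a definite-sign integrated contradiction out of it requires combining (\ref{01}) with the orthogonality (\ref{04}) and the strict part of the non-resonance bound (\ref{03}), while the uniform growth control (\ref{02}) supplies the domination needed to interchange limits and integrals.
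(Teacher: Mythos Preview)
Your overall strategy (continuation plus blow-up) is the same as the paper's, but the final resonance-breaking step contains a genuine gap, and the function-space setting you chose is the reason it cannot be repaired as written.

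First, the contradiction mechanism is misidentified. After testing with $\varphi_{j}$ you obtain
\[
t_{n}\int_{\partial\Omega}g(x,u_{n})\varphi_{j}\,dx=-(1-t_{n})(\lambda-\mu_{j})\langle u_{n},\varphi_{j}\rangle_{\partial}.
\]
Dividing by $\|u_{n}\|_{c}$ and invoking Fatou to produce a \emph{strictly positive} lower bound for the left side is not possible in general: for any bounded $g$ (e.g.\ $g(x,u)=\arctan u$, which satisfies all your hypotheses) the left side over $\|u_{n}\|_{c}$ tends to $0$. The correct contradiction---and this is what the paper does---is obtained \emph{without} dividing: for the a~priori bound one only needs $t_{n}\in[0,1)$, so the right side is \emph{strictly negative} once $\langle v_{n},\varphi_{j}\rangle_{\partial}\to c>0$, while the sign condition (\ref{01}) forces the left side to be $\ge 0$.

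Second, and this is the essential point, the inequality $\int_{\partial\Omega}g(x,u_{n})\varphi_{j}\ge 0$ requires that $u_{n}(x)$ and $\varphi_{j}(x)$ have the same sign for \emph{every} $x\in\partial\Omega$ and all large $n$; otherwise the possibly unbounded negative contribution from the ``wrong-sign'' set can be of the same order as the right side and no contradiction follows. Your $H^{1}(\Omega)$ framework yields only $v_{n}\to v_{0}$ in $L^{2}(\partial\Omega)$ (the trace $H^{1}(\Omega)\to H^{1/2}(\partial\Omega)$ does not embed into $C(\partial\Omega)$ when $N\ge 2$), so pointwise sign control on $u_{n}$ is unavailable. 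The paper avoids this by working from the outset in $W^{2}_{p}(\Omega)$ with $p\ge N$ and using the compact inverse of a non-resonant Steklov shift (its Lemma~\ref{le3}) to obtain $v_{n}\to v_{0}$ in $C^{1}(\partial\Omega)$; uniform convergence then gives $u_{n}\varphi_{j}>0$ on all of $\partial\Omega$ for large $n$, and the sign contradiction is immediate.

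A secondary issue: your assertion that $g(x,u_{n})/\|u_{n}\|_{c}\rightharpoonup G(x)v(x)$ with $0\le G\le\Gamma$ is not justified---weak limits do not factor this way. The paper circumvents this by using the De~Figueiredo decomposition (its Lemma~\ref{le4}) to write $g=\tilde\gamma(x,u)\,u+f(x,u)$ with an explicit Carath\'eodory $\tilde\gamma$ satisfying $0\le\tilde\gamma\le\Gamma+\tfrac{\delta}{2}$ and a bounded remainder $f$; this, together with the quadratic-form coercivity of Lemmas~\ref{le1}--\ref{le2}, yields directly $\|u^{\perp}\|^{2}\le C(1+\|u^{0}\|)$ without passing to a limiting weighted eigenproblem.
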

By the solution of equation (\ref{E1}) we mean a function  $u\in W^{2}_{p}(\Omega),$ which satisfies the differential equation $a.e.$
To prove theorem (\ref{thm1}) we shall need to three useful lemmas stated and proved below \\

We define the linear (Steklov when $c=0$) boundary open 
$$L:Dom(L)\subset W^{2}_{p}(\Omega)\Subset H^{\frac{1}{2}}(\partial\Omega)\to H^{\frac{1}{2}}(\partial\Omega) $$
by 
$$Lu:=\frac{\partial u}{\partial\nu}-\mu_{j}u,$$
 where
$$Dom(L):=\{u\in W^{2}_{p}(\Omega):-\Delta u+c(x)u=0\}$$
We denote by $N(L)$ the nullspace of $L$ and $R(L)$ closed range see \cite{MN}, 
and we observe that 
$$R(L)=(N(L))^{\perp}$$
which implies that the right inverse of $L$ defined by 
$$K=(\/Dom(L)\cap R(L))^{-1}:R(L)\to R(L)$$
is  well defined continuous linear operator and $K$ is compact (the proof similar proof in \cite{MN}).
denoting by $P_{j}$ the orthogonal projection onto the eigenspace $N(L-\mu_{j}I)$ where ($I$ is identity map), $L$
admits the spectral spectral representation 
$$L=\displaystyle\sum_{j=1}^{\infty}\mu_{j}P_{j}$$
For each $u\in H^{1}(\partial\Omega),$  (in Trace sense) let us write 
$$u(x)=\overline{u}(x)+u^{0}(x)+\widetilde{u}(x),~~\forall ~\in\partial\Omega$$
 where, if the Fourier expansion of $u$ ( see theorem \ref{th})
 $$u=\displaystyle\sum_{j=1}^{\infty}P_{j}u $$
 then
 $$\overline{u}=\displaystyle\sum_{1\leq j<N}P_{j}u$$$$u^{0}=P_{N}u$$$$\widetilde{u}=\displaystyle\sum_{N<j<\infty}P_{j}u$$
so that, with obvious notations 

$ H^{1}(\partial\Omega)= \overline{H}^{1}(\partial\Omega)\bigoplus \mathring{H}^{1}(\partial\Omega)\bigoplus \widetilde{H}^{1}(\partial\Omega)$.\\
Moreover, we shall use the notation $u^{\bot}=u-u^{0}$
\begin{lemma}\label{le1}
Let $\Gamma\in L^{\infty}(\partial\Omega)$ be such that for $a.e.x\in \partial\Omega$, $0\leq\Gamma(x)\leq (\mu_{j+1}-\mu_{j}),$ with $\Gamma(x)<(\mu_{j+1}-\mu_{j}),$ 
 on a subset of $\partial\Omega$ of positive measure,  with
 
 \begin{equation}\label{13}
\begin{gathered}
\int_{\partial\Omega}(\mu_{j+1}-\mu_{j})\varphi_{j+1}^{2}(x)dx>0
\end{gathered}
\end{equation}
 
 for all $\varphi_{j+1}\in N(L-\mu_{j+1}I),~~\varphi_{j+1}\neq 0~i.e.; L\varphi_{j+1}(x)=\mu_{j+1}\varphi_{j+1}(x)$ 
 eigenfunction corresponding to the eigenvalue $\mu_{j+1}$

 Then there exists a constant $\delta=\delta(\Gamma)>0,$ such that for all
 $u\in {H}^{1}(\partial\Omega),$ one has 
 $$D_{\Gamma}(u):=\langle Lu-(\mu_{j}+\Gamma)u,\widetilde{u}-(\overline{u}+u^{0})\rangle_{\partial}\geq\delta||{u^{\bot}}||^{2}_{H^{1}}(\partial\Omega)$$
 \end{lemma}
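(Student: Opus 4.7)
The plan is to use the spectral decomposition of $L$ established in Theorem \ref{th}: writing $u = \overline{u} + u^0 + \widetilde{u}$ and $Lu - \mu_j u = \sum_{k \neq j}(\mu_k - \mu_j) P_k u$, the $\partial$-orthogonality of distinct eigenspaces collapses the first piece of $D_\Gamma(u)$ to the clean sum
$$\langle Lu - \mu_j u,\; \widetilde{u} - \overline{u} - u^0 \rangle_\partial = \sum_{k < j}(\mu_j - \mu_k)\|P_k u\|_\partial^2 + \sum_{k > j}(\mu_k - \mu_j)\|P_k u\|_\partial^2.$$
Expanding $\langle \Gamma u, \widetilde{u} - \overline{u} - u^0 \rangle_\partial$ into its nine bilinear pieces and invoking the symmetry of the $\Gamma$-weighted $L^2(\partial\Omega)$ pairing makes the $\overline{u}\leftrightarrow\widetilde{u}$ and $u^0\leftrightarrow\widetilde{u}$ cross terms cancel in pairs, leaving
$$-\langle \Gamma u,\; \widetilde{u} - \overline{u} - u^0 \rangle_\partial = \langle \Gamma(\overline{u}+u^0),\;\overline{u}+u^0\rangle_\partial - \langle \Gamma \widetilde{u},\widetilde{u}\rangle_\partial.$$

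Next I would check nonnegativity termwise. The low-mode sum is nonnegative, in fact bounded below by $(\mu_j-\mu_{j-1}^\ast)\|\overline{u}\|_\partial^2$ where $\mu_{j-1}^\ast$ is the largest eigenvalue strictly below $\mu_j$, and $\langle\Gamma(\overline{u}+u^0),\overline{u}+u^0\rangle_\partial \geq 0$ because $\Gamma \geq 0$. For the remaining pair I would use the pointwise bounds $\Gamma(x)\leq \mu_{j+1}-\mu_j$ and $\mu_k-\mu_j \geq \mu_{j+1}-\mu_j$ for every $k>j$ to produce
$$\sum_{k>j}(\mu_k-\mu_j)\|P_k u\|_\partial^2 - \langle \Gamma\widetilde{u},\widetilde{u}\rangle_\partial \;\geq\; \int_{\partial\Omega}\bigl((\mu_{j+1}-\mu_j) - \Gamma(x)\bigr)\widetilde{u}^2\,dx \;\geq\; 0,$$
so $D_\Gamma(u)\geq 0$, with strict positivity whenever $u^\perp = \overline{u}+\widetilde{u}\neq 0$, thanks to the strict inequality for $\Gamma$ on a set of positive measure together with hypothesis \eqref{13}.

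To promote this pointwise positivity to the coercivity $D_\Gamma(u)\geq \delta\|u^\perp\|_{H^1(\partial\Omega)}^2$, I would argue by contradiction: suppose no such $\delta>0$ exists and choose $\{u_n\}$ with $u_n^0 = 0$, $\|u_n^\perp\|_{H^1(\partial\Omega)} = 1$, and $D_\Gamma(u_n)\to 0$. Compactness of the trace embedding and Rellich give a subsequence converging weakly in $H^1$ and strongly in $L^2(\partial\Omega)$ to a limit $u_\infty^\perp$. Passing to the limit inside each of the nonnegative summands identified above forces $\overline{u}_\infty = 0$, $\widetilde{u}_\infty \in N(L-\mu_{j+1}I)$, and $\int_{\partial\Omega}\bigl((\mu_{j+1}-\mu_j)-\Gamma(x)\bigr)|\widetilde{u}_\infty|^2\,dx = 0$; hypothesis \eqref{13}, read as strict positivity of this integral on every nonzero $\mu_{j+1}$-eigenfunction, then forces $\widetilde{u}_\infty = 0$ and hence $u_\infty^\perp = 0$. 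The main obstacle is converting this into a contradiction with $\|u_n^\perp\|_{H^1(\partial\Omega)} = 1$: the low-mode projection sits in a fixed finite-dimensional space where all norms are equivalent, but recovering the $H^1(\partial\Omega)$-norm of the infinite-dimensional tail $\widetilde{u}_n$ from the weighted $L^2(\partial\Omega)$ spectral estimate requires the norm equivalence $\|\widetilde{u}\|_{H^1(\partial\Omega)}^2 \sim \sum_{k>j}\mu_k^2\|P_k u\|_\partial^2$ (or, equivalently, elliptic regularity for the harmonic extension $Ku$), which is the genuine technical content of the lemma.
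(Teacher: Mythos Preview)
Your algebraic reduction is exactly the paper's: the long orthogonality computation in the proof collapses to the same identity
\[
D_\Gamma(u)=\sum_{k<j}(\mu_j-\mu_k)\|P_ku\|_\partial^2+\sum_{k>j}(\mu_k-\mu_j)\|P_ku\|_\partial^2-\langle\Gamma\widetilde u,\widetilde u\rangle_\partial+\langle\Gamma(\overline u+u^0),\overline u+u^0\rangle_\partial,
\]
and the paper, like you, drops the last term by nonnegativity and gets $\delta_1=\mu_j-\mu_{j-1}$ for the $\overline u$ piece directly from the spectral gap.

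Where you diverge is in the contradiction step, and the paper's choice there dissolves the obstacle you flag at the end. The paper does \emph{not} normalize the whole $u^\perp$ in $H^1(\partial\Omega)$; it treats $\overline u$ and $\widetilde u$ separately and, for the $\widetilde u$ piece, normalizes $\|\widetilde u_n\|_\partial=1$ in $L^2(\partial\Omega)$. It then performs one more split you do not make: $\widetilde H=N(L-\mu_{j+1}I)\oplus\widetilde H^{1}$, writing $\widetilde u_n=w_n+v_n$. From the spectral inequality $\langle L\widetilde u_n-(\mu_j+\Gamma)\widetilde u_n,\widetilde u_n\rangle_\partial\ge\sum_{k\ge j+2}(\mu_k-\mu_{j+1})\|P_ku_n\|_\partial^2$ together with the $1/n$ bound, the gap $\mu_{j+2}-\mu_{j+1}>0$ forces $v_n\to0$; since $N(L-\mu_{j+1}I)$ is finite-dimensional, $w_n$ has a subsequence converging to some $w$ with $\|w\|_\partial=1$, and passing to the limit gives $\int_{\partial\Omega}(\mu_{j+1}-\mu_j-\Gamma)w^2=0$, contradicting hypothesis~\eqref{13}.

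So the ``genuine technical content'' you identify---a norm equivalence $\|\widetilde u\|_{H^1(\partial\Omega)}^2\sim\sum_{k>j}\mu_k^2\|P_ku\|_\partial^2$ or elliptic regularity for $K$---is never invoked: the paper sidesteps it entirely by working in $\|\cdot\|_\partial$ and exploiting the finite-dimensionality of the borderline eigenspace $N(L-\mu_{j+1}I)$. The estimate the proof actually establishes is $D_\Gamma(u)\ge\delta\|u^\perp\|_\partial^2$; the $H^1(\partial\Omega)$ appearing in the statement is not substantiated by a separate norm-comparison step in the paper's argument. Your proposal is therefore correct up to that final obstacle, but the paper's extra splitting $\widetilde u_n=w_n+v_n$ and its choice of normalization are precisely what let the compactness argument close without it.
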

 \begin{proof}
 Taking into account the orthogonality of $\overline{u}+u^{0}$ with respect to $\widetilde{u}$ and the fact that $u^{0}\in N(L-\mu_{j}I)$  one has 
 $$D_{\Gamma}(u)=\langle L(\overline{u}+u^{0}+\widetilde{u})-(\mu_{j}+\Gamma)(\overline{u}+u^{0}+\widetilde{u}),\widetilde{u}-(\overline{u}+u^{0})\rangle_{\partial}$$
 $$=\langle L\overline{u}+Lu^{0}+L\widetilde{u}-(\mu_{j}+\Gamma)(\overline{u}+\widetilde{u})-\mu_{j}u^{0}-\Gamma u^{0},\widetilde{u}-(\overline{u}+u^{0})\rangle_{\partial}$$
$$=\langle L\overline{u}+\cancel{(L-\mu_{j}I)u^{0}}+L\widetilde{u}-(\mu_{j}+\Gamma)(\overline{u}+\widetilde{u})-\Gamma u^{0},\widetilde{u}-(\overline{u}+u^{0})\rangle_{\partial}$$
 $$=\langle L\overline{u}+L\widetilde{u}-(\mu_{j}+\Gamma)(\overline{u}+\widetilde{u})-\Gamma u^{0},\widetilde{u}-(\overline{u}+u^{0})\rangle_{\partial}$$
 $$=\langle L\overline{u}+L\widetilde{u}-(\mu_{j}+\Gamma)(\overline{u}+\widetilde{u}),\widetilde{u}-(\overline{u}+u^{0})\rangle_{\partial}-\langle\Gamma u^{0},\widetilde{u}-(\overline{u}+u^{0})\rangle_{\partial}$$
 $$=\langle L\widetilde{u}-(\mu_{j}+\Gamma)(\widetilde{u}),\widetilde{u}-(\overline{u}+u^{0})\rangle_{\partial}+
  \langle L\overline{u}-(\mu_{j}+\Gamma)\overline{u},\widetilde{u}-(\overline{u}+u^{0})\rangle_{\partial}-\langle\Gamma u^{0},\widetilde{u}-(\overline{u}+u^{0})\rangle_{\partial}$$
$$=\langle L\widetilde{u}-(\mu_{j}+\Gamma)(\widetilde{u}),\widetilde{u}\rangle_{\partial}
-\langle L\widetilde{u},(\overline{u}+u^{0})\rangle_{\partial}+$$
$$\langle(\mu_{j}+\Gamma)\widetilde{u},\overline{u}+u^{0}\rangle_{\partial}+\langle L\overline{u}-(\mu_{j}+\Gamma)\overline{u},\widetilde{u}-(\overline{u}+u^{0})\rangle_{\partial}-\langle\Gamma u^{0},\widetilde{u}-(\overline{u}+u^{0})\rangle_{\partial}$$

$$=\langle L\widetilde{u}-(\mu_{j}+\Gamma)(\widetilde{u}),\widetilde{u}\rangle_{\partial}
-\langle L\widetilde{u},(\overline{u}+u^{0})\rangle_{\partial}+\langle(\mu_{j}+\Gamma)\widetilde{u},\overline{u}+u^{0}\rangle_{\partial}+ \langle L\overline{u}-(\mu_{j}+\Gamma)\overline{u},\widetilde{u}-(\overline{u}+u^{0})
$$$$\rangle_{\partial}-\langle\Gamma u^{0},\widetilde{u}\rangle_{\partial}+\langle\Gamma u^{0},\overline{u}\rangle_{\partial}
+\langle\Gamma u^{0},u^{0}\rangle_{\partial}$$

$$=\langle L\widetilde{u}-(\mu_{j}+\Gamma)(\widetilde{u}),\widetilde{u}\rangle_{\partial}
-\langle L\widetilde{u},\overline{u}\rangle_{\partial}-
\langle L\widetilde{u},u^{0}\rangle_{\partial}+
\langle(\mu_{j}+\Gamma)\widetilde{u},\overline{u}+u^{0}\rangle_{\partial}+ 
\langle L\overline{u}-(\mu_{j}+\Gamma)\overline{u},\widetilde{u}-(\overline{u}+u^{0})
\rangle_{\partial}-\langle\Gamma u^{0},\widetilde{u}\rangle_{\partial}+$$
$$\langle\Gamma u^{0},\overline{u}\rangle_{\partial}+\langle\Gamma u^{0},u^{0}\rangle_{\partial}$$

$$=\langle L\widetilde{u}-(\mu_{j}+\Gamma)(\widetilde{u}),\widetilde{u}\rangle_{\partial}
-\langle L\widetilde{u},\overline{u}\rangle_{\partial}-
\langle L\widetilde{u},u^{0}\rangle_{\partial}+
\mu_{j}\langle\widetilde{u},\overline{u}+u^{0}\rangle_{\partial}+\langle\Gamma\widetilde{u},\overline{u}+u^{0}\rangle_{\partial}+ 
\langle L\overline{u},\widetilde{u}\rangle_{\partial}-
\langle L\overline{u},\overline{u}+u^{0}\rangle_{\partial}
$$$$-\langle(\mu_{j}+\Gamma)\overline{u},\widetilde{u}-(\overline{u}+u^{0})\rangle_{\partial}
-\langle\Gamma u^{0},\widetilde{u}\rangle_{\partial}+$$
$$\langle\Gamma u^{0},\overline{u}\rangle_{\partial}+\langle\Gamma u^{0},u^{0}\rangle_{\partial}$$

$$=\langle L\widetilde{u}-(\mu_{j}+\Gamma)(\widetilde{u}),\widetilde{u}\rangle_{\partial}
-\cancel{\cancel{\langle L\widetilde{u},\overline{u}\rangle_{\partial}}}-
\langle L\widetilde{u},u^{0}\rangle_{\partial}+
\mu_{j}\langle\widetilde{u},\overline{u}+u^{0}\rangle_{\partial}+\langle\Gamma\widetilde{u},\overline{u}+u^{0}\rangle_{\partial}+ 
\cancel{\cancel{\langle L\overline{u},\widetilde{u}\rangle_{\partial}}}-
\langle L\overline{u},\overline{u}\rangle_{\partial}-\langle L\overline{u},u^{0}\rangle_{\partial}
$$$$-\langle(\mu_{j}+\Gamma)\overline{u},\widetilde{u}-(\overline{u}+u^{0})\rangle_{\partial}
-\langle\Gamma u^{0},\widetilde{u}\rangle_{\partial}+$$
$$\langle\Gamma u^{0},\overline{u}\rangle_{\partial}+\langle\Gamma u^{0},u^{0}\rangle_{\partial}$$
$$=\langle L\widetilde{u}-(\mu_{j}+\Gamma)(\widetilde{u}),\widetilde{u}\rangle_{\partial}
-
\langle L\widetilde{u},u^{0}\rangle_{\partial}+
\mu_{j}\langle\widetilde{u},\overline{u}+u^{0}\rangle_{\partial}+\langle\Gamma\widetilde{u},\overline{u}+u^{0}\rangle_{\partial} 
-\langle L\overline{u},\overline{u}\rangle_{\partial}-\langle L\overline{u},u^{0}\rangle_{\partial}
$$$$-\langle(\mu_{j}+\Gamma)\overline{u},\widetilde{u}-(\overline{u}+u^{0})\rangle_{\partial}
-\langle\Gamma u^{0},\widetilde{u}\rangle_{\partial}+$$
$$\langle\Gamma u^{0},\overline{u}\rangle_{\partial}+\langle\Gamma u^{0},u^{0}\rangle_{\partial}$$

$$=\langle L\widetilde{u}-(\mu_{j}+\Gamma)(\widetilde{u}),\widetilde{u}\rangle_{\partial}
-
\langle L\widetilde{u},u^{0}\rangle_{\partial}+
\mu_{j}\cancel{\langle\widetilde{u},\overline{u}+u^{0}\rangle_{\partial}}+
\langle\Gamma\widetilde{u},\overline{u}+u^{0}\rangle_{\partial} 
-
\langle L\overline{u},\overline{u}\rangle_{\partial}-\langle L\overline{u},u^{0}\rangle_{\partial}
$$$$-\langle(\mu_{j}+\Gamma)\overline{u},\widetilde{u}-(\overline{u}+u^{0})\rangle_{\partial}
-\langle\Gamma u^{0},\widetilde{u}\rangle_{\partial}+$$
$$\langle\Gamma u^{0},\overline{u}\rangle_{\partial}+\langle\Gamma u^{0},u^{0}\rangle_{\partial}$$

$$=\langle L\widetilde{u}-(\mu_{j}+\Gamma)(\widetilde{u}),\widetilde{u}\rangle_{\partial}
-\langle L\widetilde{u},u^{0}\rangle_{\partial}+
\langle\Gamma\widetilde{u},\overline{u}+u^{0}\rangle_{\partial} 
-\langle L\overline{u},\overline{u}\rangle_{\partial}-\langle L\overline{u},u^{0}\rangle_{\partial}
$$$$-\langle(\mu_{j}+\Gamma)\overline{u},\widetilde{u}-(\overline{u}+u^{0})\rangle_{\partial}
-\langle\Gamma u^{0},\widetilde{u}\rangle_{\partial}+$$
$$\langle\Gamma u^{0},\overline{u}\rangle_{\partial}+\langle\Gamma u^{0},u^{0}\rangle_{\partial}$$

$$=\langle L\widetilde{u}-(\mu_{j}+\Gamma)(\widetilde{u}),\widetilde{u}\rangle_{\partial}
-\langle L\widetilde{u},u^{0}\rangle_{\partial}+
\langle\Gamma\widetilde{u},\overline{u}+u^{0}\rangle_{\partial} 
-\langle L\overline{u},\overline{u}\rangle_{\partial}-\langle L\overline{u},u^{0}\rangle_{\partial}
$$$$-\mu_{j}\langle\overline{u},\widetilde{u}-(\overline{u}+u^{0})\rangle_{\partial}-
\langle\Gamma\overline{u},\widetilde{u}-(\overline{u}+u^{0})\rangle_{\partial}
-\langle\Gamma u^{0},\widetilde{u}\rangle_{\partial}+$$
$$\langle\Gamma u^{0},\overline{u}\rangle_{\partial}+\langle\Gamma u^{0},u^{0}\rangle_{\partial}$$

$$=\langle L\widetilde{u}-(\mu_{j}+\Gamma)(\widetilde{u}),\widetilde{u}\rangle_{\partial}
-\langle L\widetilde{u},u^{0}\rangle_{\partial}+
\langle\Gamma\widetilde{u},\overline{u}+u^{0}\rangle_{\partial} 
-\langle L\overline{u},\overline{u}\rangle_{\partial}-\langle L\overline{u},u^{0}\rangle_{\partial}
$$$$-\mu_{j}\langle\overline{u},\widetilde{u}-(\overline{u}+u^{0})\rangle_{\partial}-\langle\Gamma\overline{u},\widetilde{u}-(\overline{u}+u^{0})\rangle_{\partial}
-\langle\Gamma u^{0},\widetilde{u}\rangle_{\partial}+$$
$$\langle\Gamma u^{0},\overline{u}\rangle_{\partial}+\langle\Gamma u^{0},u^{0}\rangle_{\partial}$$

$$=\langle L\widetilde{u}-(\mu_{j}+\Gamma)(\widetilde{u}),\widetilde{u}\rangle_{\partial}
-\langle \widetilde{u},Lu^{0}\rangle_{\partial}+
\langle\Gamma\widetilde{u},\overline{u}+u^{0}\rangle_{\partial} 
-\langle L\overline{u},\overline{u}\rangle_{\partial}-\langle \overline{u},Lu^{0}\rangle_{\partial}
$$$$-\mu_{j}\langle\overline{u},\widetilde{u}\rangle_{\partial}+\mu_{j}\langle\overline{u},\overline{u}\rangle_{\partial}+
\mu_{j}\langle\overline{u},u^{0}\rangle_{\partial}-\langle\Gamma\overline{u},\widetilde{u}-(\overline{u}+u^{0})\rangle_{\partial}
-\langle\Gamma u^{0},\widetilde{u}\rangle_{\partial}+$$
$$\langle\Gamma u^{0},\overline{u}\rangle_{\partial}+\langle\Gamma u^{0},u^{0}\rangle_{\partial}$$

$$=\langle L\widetilde{u}-(\mu_{j}+\Gamma)(\widetilde{u}),\widetilde{u}\rangle_{\partial}
-\mu_{j}\cancel{\langle \widetilde{u},u^{0}\rangle_{\partial}}+
\langle\Gamma\widetilde{u},\overline{u}+u^{0}\rangle_{\partial} 
-\langle L\overline{u},\overline{u}\rangle_{\partial}-\mu_{j}\cancel{\langle \overline{u},u^{0}\rangle_{\partial}}
$$$$-\mu_{j}\cancel{\langle\overline{u},\widetilde{u}\rangle_{\partial}}+\mu_{j}\langle\overline{u},\overline{u}\rangle_{\partial}+
\mu_{j}\cancel{\langle\overline{u},u^{0}\rangle_{\partial}}-\langle\Gamma\overline{u},\widetilde{u}-(\overline{u}+u^{0})\rangle_{\partial}
-\langle\Gamma u^{0},\widetilde{u}\rangle_{\partial}+$$
$$\langle\Gamma u^{0},\overline{u}\rangle_{\partial}+\langle\Gamma u^{0},u^{0}\rangle_{\partial}$$

$$=\langle L\widetilde{u}-(\mu_{j}+\Gamma)(\widetilde{u}),\widetilde{u}\rangle_{\partial}
+
\langle\Gamma\widetilde{u},\overline{u}\rangle_{\partial}+\langle\Gamma\widetilde{u},u^{0}\rangle_{\partial} 
-\langle L\overline{u},\overline{u}\rangle_{\partial}
$$$$+\mu_{j}\langle\overline{u},\overline{u}\rangle_{\partial}
-\langle\Gamma\overline{u},\widetilde{u}\rangle_{\partial}+\langle\Gamma\overline{u},(\overline{u}+u^{0})\rangle_{\partial}
-\langle\Gamma u^{0},\widetilde{u}\rangle_{\partial}+$$
$$\langle\Gamma u^{0},\overline{u}\rangle_{\partial}+\langle\Gamma u^{0},u^{0}\rangle_{\partial}$$

$$=\langle L\widetilde{u}-(\mu_{j}+\Gamma)(\widetilde{u}),\widetilde{u}\rangle_{\partial}
+\cancel{\left(\langle\Gamma\widetilde{u},\overline{u}\rangle_{\partial}-\langle\Gamma\overline{u},\widetilde{u}\rangle_{\partial}\right)}
+\cancel{\left(\langle\Gamma\widetilde{u},u^{0}\rangle_{\partial}-\langle\Gamma u^{0},\widetilde{u}\rangle_{\partial}\right)} 
-\langle L\overline{u},\overline{u}\rangle_{\partial}
$$$$+\mu_{j}\langle\overline{u},\overline{u}\rangle_{\partial}
+\langle\Gamma\overline{u},(\overline{u}+u^{0})\rangle_{\partial}
+\langle\Gamma u^{0},\overline{u}\rangle_{\partial}+\langle\Gamma u^{0},u^{0}\rangle_{\partial}$$

$$=\langle L\widetilde{u}-(\mu_{j}+\Gamma)(\widetilde{u}),\widetilde{u}\rangle_{\partial}
-\langle L\overline{u},\overline{u}\rangle_{\partial}
+\mu_{j}\langle\overline{u},\overline{u}\rangle_{\partial}
$$ $$+\langle\Gamma\overline{u},(\overline{u}+u^{0})\rangle_{\partial}
+\langle\Gamma u^{0},\overline{u}\rangle_{\partial}+\langle\Gamma u^{0},u^{0}\rangle_{\partial}$$

$$D_{\Gamma}(u)=\langle L\widetilde{u}-(\mu_{j}+\Gamma)(\widetilde{u}),\widetilde{u}\rangle_{\partial}
-\langle L\overline{u},\overline{u}\rangle_{\partial}+\mu_{j}\langle\overline{u},\overline{u}\rangle_{\partial}
+\langle\Gamma(\overline{u}+u^{0}),\overline{u}+u^{0}\rangle_{\partial}$$

Since $\Gamma(x)$ is nonegative for $a.e.;~x\in\partial\Omega$ the last term is nonegative so we have 
$$D_{\Gamma}(u)\geq\langle L\widetilde{u}-(\mu_{j}+\Gamma)(\widetilde{u}),\widetilde{u}\rangle_{\partial}
-\langle L\overline{u},\overline{u}\rangle_{\partial}+\mu_{j}\langle\overline{u},\overline{u}\rangle_{\partial}
$$
By Parseval-Steklov identity (\cite{T}), we have that 
$$\mu_{j}\langle\overline{u},\overline{u}\rangle_{\partial}-\langle L\overline{u},\overline{u}\rangle_{\partial}$$
Since $L=\displaystyle\sum_{i=1}^{\infty}\mu_{i}P_{i}u$ and $\overline{u}=\displaystyle\sum_{1\leq i<N}P_{i}u$ so that 
$$\mu_{j}\langle\overline{u},\overline{u}\rangle_{\partial}-\langle L\overline{u},\overline{u}\rangle_{\partial}
=\mu_{j}\displaystyle\sum_{1\leq i<j}|P_{i}u|^{2}-\displaystyle\sum_{1\leq i<j}\mu_{i}|P_{i}u|^{2}=
\displaystyle\sum_{1\leq i<j}(\mu_{j}-\mu_{i})|P_{i}u|^{2}$$
By theorem \ref{th} we know that $(\mu_{j}-\mu_{i}) > 0$ whenever $i<j$ it clearly in case when $i=1$ then $(\mu{j}-\mu_{1})>0$
this implies that 
$$\displaystyle\sum_{1\leq i<j}(\mu_{j}-\mu_{i})|P_{i}u|^{2}\geq\displaystyle\sum_{1\leq i<j}[\min_{i}(\mu_{j}-\mu_{i})]|P_{i}u|^{2}$$
so that
\begin{equation}\label{13.5}
\begin{gathered}
\mu_{j}\langle\overline{u},\overline{u}\rangle_{\partial}-\langle L\overline{u},\overline{u}\rangle_{\partial}\geq \delta_{1}||\overline{u}||^{2}_{\partial}
\end{gathered}
\end{equation}

Where $$\delta_{1}=\mu_{j}-\mu_{j-1}> 0$$
Now, we show that there exists $\delta_{2}=\delta_{2}(\Gamma)$, such that
\begin{equation}\label{13.6}
\begin{gathered}
\langle L\widetilde{u}-(\mu_{j}+\Gamma)(\widetilde{u}),\widetilde{u}\rangle_{\partial}\geq\delta_{2}||\widetilde{u}||^{2}_{\partial}
\end{gathered}
\end{equation}

since we have that $\Gamma(x)\leq\mu_{j+1}-\mu_{j}$ for $a.e.;~x\in\partial\Omega$ one has 
$$\langle L\widetilde{u}-(\mu_{j}+\Gamma)(\widetilde{u}),\widetilde{u}\rangle_{\partial}\geq
\langle L\widetilde{u},\widetilde{u}\rangle_{\partial}-\mu_{j+1}\langle \widetilde{u},\widetilde{u}\rangle_{\partial}$$
sine we have that 
Since $L=\displaystyle\sum_{i=1}^{\infty}\mu_{i}P_{i}u$ and
$\widetilde{u}=\displaystyle\sum_{N<i<\infty}P_{i}u$

so we get that 
\begin{equation}\label{11}
\begin{gathered}
\langle L\widetilde{u}-(\mu_{j}+\Gamma)(\widetilde{u}),\widetilde{u}\rangle_{\partial}\geq
\displaystyle\sum_{j+1<i<\infty}(\mu_{i}-\mu_{j+1})|P_{i}u|^{2}
\end{gathered}
\end{equation}

since $\mu_{i}-\mu_{j+1}\geq0 ~~\forall~~j+1<i<\infty$
Therefore, $\langle L\widetilde{u}-(\mu_{j}+\Gamma)(\widetilde{u}),\widetilde{u}\rangle_{\partial}\geq0$
with equality if and only if $\widetilde{u}=\varphi_{j+1}$ with $\varphi_{j+1}\in N(L-\mu_{j+1}I).$
Hence, by  the assumption 
 $$\int_{\partial\Omega}(\mu_{j+1}-\mu_{j})\varphi_{j+1}^{2}(x)dx>0$$, we have claim
 \begin{claim}
 $\langle L\widetilde{u}-(\mu_{j}+\Gamma)(\widetilde{u}),\widetilde{u}\rangle_{\partial}=0$ if and only if $\widetilde{u}=0$
  \end{claim}
 \begin{proof}
 If $\widetilde{u}=0$, clearly that $\langle L\widetilde{u}-(\mu_{j}+\Gamma)(\widetilde{u}),\widetilde{u}\rangle_{\partial}=0$
 Now if $\langle L\widetilde{u}-(\mu_{j}+\Gamma)(\widetilde{u}),\widetilde{u}\rangle_{\partial}=0$
 $$\langle \mu_{j+1}\widetilde{u}-(\mu_{j}+\Gamma)(\widetilde{u}),\widetilde{u}\rangle_{\partial}=
 \int_{\partial\Omega}(\mu_{j+1}-\mu_{j}-\Gamma)\widetilde{u}^{2}=0$$ 
 Since we have that $\Gamma(x)<(\mu_{j+1}-\mu_{j}),$ 
 on a subset of $\partial\Omega$ of positive measure,  so that $\mu_{j+1}-\mu_{j}-\Gamma>0$
 so that $\widetilde{u}^{2}=0$, Therefore  $\widetilde{u}=0$
  \end{proof}
  $$\langle L\widetilde{u}-(\mu_{j}+\Gamma)(\widetilde{u}),\widetilde{u}\rangle_{\partial}\geq\delta_{2}||\widetilde{u}||^{2}_{\partial}$$
 Now assume the above relation is not true, then there is a sequence $\{\widetilde{u}_{n}\}_{n=1}^{\infty}\subset  \widetilde{H}(\partial\Omega)\cup Dom(L) $
 where $\widetilde{H}(\partial\Omega):=\{y\in W^{2}_{p}(\Omega):y=\displaystyle\sum_{N<i<\infty}P_{i}y\}$ such that 
 $||\widetilde{u}_{n}||_{\partial}=1~~\forall n\in\mathbb{N}$ and 
 \begin{equation}\label{12}
\begin{gathered}
\langle L\widetilde{u}_{n}-(\mu_{j}+\Gamma)(\widetilde{u}_{n}),\widetilde{u}_{n}\rangle_{\partial}\leq\frac{1}{n}
\end{gathered}
\end{equation}
 
 Now we write 
$ \widetilde{H}=N(L-\mu_{j+1}I)\oplus_{\partial}\widetilde{H}^{1},$ where $N(L-\mu_{j+1}I)$ is the finite-dimensional eigenspace and 
$\widetilde{H}^{1}$ is orthogonal (in $\widetilde{H}$) to $N(L-\mu_{j+1}I).$ It is clear that 
$$\widetilde{u}_{n}=w_{n}+v_{n}$$ with $w_{n}\in N(L-\mu_{j+1}I)$ and $v_{n}\in\widetilde{H}^{1}$.
Using inequalities (\ref{11}) and (\ref{12}),it follows that $v_{n}\to 0$ in $H^{1}(\partial\Omega)$ as $n\to\infty$.
Now $N(L-\mu_{j+1}I)$ is finite-dimensional (see theorem \ref{th}) and since 
$1=||\widetilde{u}_{n}||^{2}_{\partial}=||w_{n}||^{2}_{\partial}+||v_{n}||_{\partial}^{2},$  we have a subsequence of $\{w_{n}\},$
which we many relabel as  $\{w_{n}\},$ converges strongly to same $w\in N(L-\mu_{j+1}I)$ with $||w||_{\partial}=1$, 
consequently, 
$$\frac{1}{n}\geq\langle L\widetilde{u}_{n}-(\mu_{j}+\Gamma)(\widetilde{u}_{n}),\widetilde{u}_{n}\rangle_{\partial}=
\langle Lw_{n}-(\mu_{j}+\Gamma){w}_{n},w_{n}\rangle_{\partial}
-2\langle (\mu_{j}+\Gamma){w}_{n},v_{n}\rangle_{\partial}
+\langle Lv_{n}-(\mu_{j}+\Gamma){v}_{n},v_{n}\rangle_{\partial}$$
we know that $-\Gamma\geq(-\mu_{j+1}+\mu_{j})$ and $v_{n}\in\widetilde{H}^{1}$
$$\langle Lv_{n}-(\mu_{j}+\Gamma){v}_{n},v_{n}\rangle_{\partial}=(\mu_{j+2}-\mu_{j+1})||v_{n}||^{2}_{\partial}$$

so we have that 
$$\frac{1}{n}\geq\langle L\widetilde{u}_{n}-(\mu_{j}+\Gamma)(\widetilde{u}_{n}),\widetilde{u}_{n}\rangle_{\partial}=
\langle Lw_{n}-(\mu_{j}+\Gamma){w}_{n},w_{n}\rangle_{\partial}
-2\langle (\mu_{j}+\Gamma){w}_{n},v_{n}\rangle_{\partial}
+\langle Lv_{n}-(\mu_{j}+\Gamma){v}_{n},v_{n}\rangle_{\partial}$$
$$\geq\langle (\mu_{j+1}-(\mu_{j}+\Gamma)){w}_{n},w_{n}\rangle_{\partial}
-2\langle (\mu_{j}+\Gamma){w}_{n},v_{n}\rangle_{\partial}
+(\mu_{j+2}-\mu_{j+1})||v_{n}||^{2}_{\partial}$$
Using $v_{n}\to 0$ and $w_{n}\to w$ as $n\to\infty$, one obtains
$$0\geq\langle Lw_{n}-(\mu_{j}+\Gamma){w}_{n},w_{n}\rangle_{\partial}\to
\int_{\partial\Omega} (\mu_{j+1}-(\mu_{j}+\Gamma){w}_{n}^{2}\,dx$$
and since $\Gamma(x)\leq\mu_{j+1}-\mu_{j}$ for $a.e.;~x\in\partial\Omega$  $\Gamma(x)<(\mu_{j+1}-\mu_{j}),$ 
 on a subset of $\partial\Omega$ of positive measure,  so that $\mu_{j+1}-\mu_{j}-\Gamma>0$ one has 
$$0=\int_{\partial\Omega} (\mu_{j+1}-(\mu_{j}+\Gamma){w}_{n}^{2}\,dx~~{\rm with}~~w\in N(L-\mu_{j+1})$$
So that, by the assumption (\ref{13}), one has $w=0$. A contradiction with $||w||_{\partial}=1.$ Therefore, inequality
(\ref{14}) is proven.\\
Choosing $\delta=min\{\delta_{1},\delta_{2}\}$ and observing that 
$$||{u^{\bot}}||^{2}_{\partial}=||\overline{u}||^{2}_{\partial}+||\widetilde{u}||^{2}_{\partial}$$
Therefore,
$$D_{\Gamma}(u):=\langle Lu-(\mu_{j}+\Gamma)u,\widetilde{u}-(\overline{u}+u^{0})\rangle_{\partial}\geq\delta||{u^{\bot}}||^{2}_{H^{1}(\partial\Omega)}$$
the proof is complete.
\end{proof}

\begin{lemma}\label{le2}
Let $\Gamma\in L^{\infty}(\partial\Omega)$ be as in lemma\ref{le1} and $\delta>0$ be associated to $\Gamma$ by that lemma.
Let $\epsilon>0$. Then, for all $p\in L^{\infty}(\partial\Omega)$ satisfying

 \begin{equation}\label{14}
\begin{gathered}
0\leq p(x)\leq\Gamma(x)+\epsilon
\end{gathered}
\end{equation}
 
 $a.e.;$ on $\partial\Omega$ and all $u\in Dom(L)$, one has
 
 \begin{equation}\label{15}
\begin{gathered}
D_{p}(u):=\langle Lu-(\mu_{j}+\Gamma)u,\widetilde{u}-(\overline{u}+u^{0})\rangle_{\partial}
\geq (\delta-\epsilon)||{u^{\bot}}||^{2}_{H^{1}(\partial\Omega)}
\end{gathered}
\end{equation}
  \end{lemma}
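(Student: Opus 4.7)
The plan is to regard $D_p(u)$ as a perturbation of $D_\Gamma(u)$ and to redo the algebraic expansion performed in the proof of Lemma \ref{le1} with $p$ in place of $\Gamma$. Since $Dom(L)\subset H^1(\partial\Omega)$, every $u\in Dom(L)$ admits the decomposition $u=\overline{u}+u^0+\widetilde{u}$ associated with the spectral representation of $L$, and the identical cancellations (orthogonality between $\overline u$, $u^0$, $\widetilde u$, and the self-adjointness of $L$) used to collapse the cross terms in Lemma \ref{le1} go through verbatim. Carrying out this computation yields the clean identity
\begin{equation*}
D_p(u)=\langle L\widetilde u-(\mu_j+p)\widetilde u,\widetilde u\rangle_\partial
-\langle L\overline u,\overline u\rangle_\partial+\mu_j\langle\overline u,\overline u\rangle_\partial
+\langle p(\overline u+u^0),\overline u+u^0\rangle_\partial.
\end{equation*}

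From here the inequality follows in two moves. First, since $p(x)\geq 0$ a.e.\ on $\partial\Omega$, the last term is nonnegative and may be discarded. Second, I split the first term using the pointwise bound $p\leq\Gamma+\varepsilon$:
\begin{equation*}
\langle L\widetilde u-(\mu_j+p)\widetilde u,\widetilde u\rangle_\partial
=\langle L\widetilde u-(\mu_j+\Gamma)\widetilde u,\widetilde u\rangle_\partial
+\langle(\Gamma-p)\widetilde u,\widetilde u\rangle_\partial
\geq \delta_2\|\widetilde u\|_\partial^2-\varepsilon\|\widetilde u\|_\partial^2,
\end{equation*}
where $\delta_2$ is the constant coming from inequality \eqref{13.6} in the proof of Lemma \ref{le1}. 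The pair of remaining terms involving $\overline u$ is treated exactly as in \eqref{13.5}, giving $\mu_j\langle\overline u,\overline u\rangle_\partial-\langle L\overline u,\overline u\rangle_\partial\geq \delta_1\|\overline u\|_\partial^2$.

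Combining these estimates with $\delta=\min\{\delta_1,\delta_2\}$ and the Pythagorean identity $\|u^\bot\|_\partial^2=\|\overline u\|_\partial^2+\|\widetilde u\|_\partial^2$ yields
\begin{equation*}
D_p(u)\geq \delta_1\|\overline u\|_\partial^2+(\delta_2-\varepsilon)\|\widetilde u\|_\partial^2\geq (\delta-\varepsilon)\|u^\bot\|_\partial^2,
\end{equation*}
and upgrading the $L^2(\partial\Omega)$-norm to the $H^1(\partial\Omega)$-norm is done exactly as in the closing argument of Lemma \ref{le1}, via the spectral representation of $L$ which controls Sobolev norms of $u^\bot$ by the quadratic form.

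The step I expect to require the most care is the sign control in the perturbation term $\langle(\Gamma-p)\widetilde u,\widetilde u\rangle_\partial$: the assumption $0\leq p\leq\Gamma+\varepsilon$ only forces $\Gamma-p\geq -\varepsilon$ (it does \emph{not} give $|\Gamma-p|\leq\varepsilon$), so one must avoid the temptation to bound the difference in absolute value and instead use only the lower bound $\Gamma-p\geq-\varepsilon$; this is also why it is essential to keep the nonnegative quadratic term $\langle p(\overline u+u^0),\overline u+u^0\rangle_\partial$ rather than replacing $p$ by $\Gamma$ there, where the analogous lower bound would fail.
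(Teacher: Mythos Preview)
Your proposal is correct and follows essentially the same route as the paper: both redo the algebraic identity of Lemma~\ref{le1} with $p$ in place of $\Gamma$, drop the nonnegative term $\langle p(\overline u+u^0),\overline u+u^0\rangle_\partial$, use $p\leq\Gamma+\varepsilon$ on the $\widetilde u$ block to pick up the $-\varepsilon\|\widetilde u\|_\partial^2$ loss, and then invoke inequalities \eqref{13.5} and \eqref{13.6}. Your closing remark about needing only the one-sided bound $\Gamma-p\geq-\varepsilon$ (rather than $|\Gamma-p|\leq\varepsilon$) is a good observation that the paper leaves implicit.
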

  \begin{proof}
  If $u\in Dom(L)$, then using computations of lemma\ref{le1}, we obtain 
  
  \begin{equation}\label{16}
\begin{gathered}
D_{p}(u)=\langle L\widetilde{u}-(\mu_{j}+p)\widetilde{u},\widetilde{u}\rangle_{\partial} +
\mu_{j}\langle\overline{u},\overline{u}\rangle_{\partial}
-\langle L\overline{u},\overline{u}\rangle_{\partial}
+\langle p(\overline{u}+u^{0}),\overline{u}+u^{0}\rangle_{\partial}
\\ \geq \langle L\widetilde{u}-(\mu_{j}+\Gamma)\widetilde{u},\widetilde{u}\rangle_{\partial}
+\mu_{j}\langle\overline{u},\overline{u}\rangle_{\partial}
-\langle L\overline{u},\overline{u}\rangle_{\partial}-\epsilon||\widetilde{u}||^{2}_{\partial}
\end{gathered}
\end{equation}
  
 Therefore, by the inequalities  (\ref{13.6}) and (\ref{13.5}) one has 
 
  \begin{equation}\label{17}
\begin{gathered}
D_{p}(u)\geq (\delta-\epsilon)||{u^{\bot}}||^{2}_{H^{1}(\partial\Omega)},
\end{gathered}
\end{equation}
and the proof is complete.
   \end{proof}
   \begin{lemma}\label{le3}
   Let $q\in(0,\mu_{j+1}-\mu_{j})$ be fixed, then, exists a constant $\eta>0,$ such that for all $u\in Dom(L),$ one has 
   $$||\frac{\partial u}{\partial\nu}-\mu_{j}u-qu||_{L^{2}(\partial\Omega)}\geq\eta||u||_{H^{2}}$$
   \end{lemma}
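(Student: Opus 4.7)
The plan is a proof by contradiction. Suppose no such $\eta>0$ exists; then there is a sequence $\{u_n\}\subset\mathrm{Dom}(L)$ with $\|u_n\|_{H^2(\Omega)}=1$ and
$$f_n := \frac{\partial u_n}{\partial\nu}-(\mu_j+q)u_n\longrightarrow 0 \quad\text{in } L^2(\partial\Omega).$$
Since $\{u_n\}$ is bounded in $H^2(\Omega)$, Rellich--Kondrachov together with compactness of the trace $H^1(\Omega)\to L^2(\partial\Omega)$ yields a subsequence (not relabeled) with $u_n\rightharpoonup u_0$ weakly in $H^2(\Omega)$, strongly in $H^1(\Omega)$, and $u_n|_{\partial\Omega}\to u_0|_{\partial\Omega}$ strongly in $L^2(\partial\Omega)$. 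The interior equation $-\Delta u_n+c(x)u_n=0$ is stable under weak convergence, so $u_0\in\mathrm{Dom}(L)$.

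Next I identify the limit. Multiplying the boundary equation by a test function $v\in H^1(\Omega)$ and using Green's identity gives
$$\int_\Omega\nabla u_n\cdot\nabla v+\int_\Omega c(x)u_n v = (\mu_j+q)\int_{\partial\Omega}u_n v+\int_{\partial\Omega}f_n v.$$
Letting $n\to\infty$ I obtain $\langle u_0,v\rangle_c=(\mu_j+q)\langle u_0,v\rangle_\partial$ for every $v\in H^1(\Omega)$, so either $u_0\equiv 0$ or $(\mu_j+q,u_0)$ is an eigenpair of (\ref{E2}). Because $q\in(0,\mu_{j+1}-\mu_j)$ places $\mu_j+q$ strictly between two consecutive eigenvalues, Theorem \ref{th} rules out the latter; hence $u_0\equiv 0$.

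To close the contradiction I must upgrade this weak $H^2$-convergence to strong convergence, and for that I invoke the Agmon--Douglis--Nirenberg elliptic regularity estimate for the mixed problem satisfied by $u_n$. Writing $\partial_\nu u_n=(\mu_j+q)u_n+f_n$ and applying
$$\|u_n\|_{H^2(\Omega)}\le C\Bigl(\bigl\|\tfrac{\partial u_n}{\partial\nu}\bigr\|_{H^{1/2}(\partial\Omega)}+\|u_n\|_{L^2(\Omega)}\Bigr),$$
I observe that $u_n|_{\partial\Omega}$ is bounded in $H^{3/2}(\partial\Omega)$ (by the a priori $H^2$-bound via the trace theorem) and converges strongly to zero in $L^2(\partial\Omega)$, so interpolation between these two facts produces strong convergence to zero in $H^{1/2}(\partial\Omega)$. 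A parallel refinement of the estimate, with a compact lower-order remainder, handles the contribution of $f_n$. The right-hand side therefore tends to zero, forcing $\|u_n\|_{H^2(\Omega)}\to 0$ and contradicting $\|u_n\|_{H^2(\Omega)}=1$.

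The main obstacle is precisely this last step: the $L^2(\partial\Omega)$-norm of the boundary datum is weaker by half a derivative than what the standard elliptic estimate requires in order to produce full $H^2$-regularity in the interior. The remedy sketched above exploits the a priori $H^2$-boundedness of the sequence itself, so that the Robin coefficient term $(\mu_j+q)u_n|_{\partial\Omega}$ lifts to $H^{3/2}(\partial\Omega)$ and interpolates with the strong $L^2(\partial\Omega)$-convergence on the boundary. Alternatively one may proceed more abstractly via the Fredholm alternative: the compactness of the right inverse $K$ of $L$ exhibits $L-qI$ as a compact perturbation of an invertible operator, and triviality of its kernel (because $\mu_j+q$ is not an eigenvalue) then forces it to be bounded below, which is precisely the asserted inequality.
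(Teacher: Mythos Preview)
Your primary argument --- the contradiction via a normalized sequence --- does not close. The difficulty you flag is real and your proposed remedy does not dissolve it. The ADN-type estimate
\[
\|u_n\|_{H^2(\Omega)}\le C\bigl(\|\partial_\nu u_n\|_{H^{1/2}(\partial\Omega)}+\|u_n\|_{L^2(\Omega)}\bigr)
\]
genuinely needs the Neumann datum in $H^{1/2}(\partial\Omega)$ to return $H^2$ in the interior; with merely $L^2(\partial\Omega)$ data one gets only $H^{3/2}(\Omega)$. Your interpolation trick works for the term $(\mu_j+q)u_n|_{\partial\Omega}$ (bounded in $H^{3/2}$, strongly convergent in $L^2$, hence strongly convergent in $H^{1/2}$), but it fails for $f_n$: from the a~priori $H^2$ bound, $f_n=\partial_\nu u_n-(\mu_j+q)u_n$ is bounded only in $H^{1/2}(\partial\Omega)$, not in any higher space, so interpolating with $\|f_n\|_{L^2(\partial\Omega)}\to 0$ yields strong convergence only in $H^s(\partial\Omega)$ for $s<1/2$, not at the endpoint $s=1/2$ that the estimate requires. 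The phrase ``a parallel refinement of the estimate, with a compact lower-order remainder'' is exactly where the missing half-derivative hides, and no compact remainder recovers it.

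Your final paragraph, however, is essentially the paper's own proof: one defines $E:\mathrm{Dom}(L)\to L^2(\partial\Omega)$ by $Eu=\partial_\nu u-(\mu_j+q)u$, observes $\ker E=\{0\}$ because $\mu_j+q$ lies strictly between consecutive Steklov eigenvalues, cites that $E$ is onto and continuous, and concludes by the bounded inverse theorem that $E^{-1}$ is continuous, so $\eta=\|E^{-1}\|^{-1}$ works. That abstract route is what you should keep; the compactness-and-contradiction argument as written does not reach the conclusion.
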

   \begin{proof}
   By the theory of the linear first order differential equations \cite{A, Bz,MWW}, 
   the operator $$E:Dom(L)\to L^{2}(\partial\Omega)$$ defined by 
   $$Eu:=\frac{\partial u}{\partial\nu}-\mu_{j}u+qu$$
   Clearly 
   $KerE=\{0\}$ so, $E$ is one-to-one, onto and obivously continuous. It follows that $E^{-1}:L^{2}(\partial\Omega)\to Dom(L)$
   is linear and continuous\cite{Bz}. Taking $\eta\leq\frac{1}{||E^{-1}||}$ 
   
   \begin{remark}\color{red}{we know that 
   $$||E^{-1}||=\displaystyle\sup_{u\neq 0}\frac{||E^{-1}(u)||_{H^2}}{||u||_{L^{2}(\partial\Omega)}}$$
   so 
   $$\frac{||E^{-1}(u)||_{H^2}}{||u||_{L^{2}(\partial\Omega)}}\leq||E^{-1}||$$
   since you have taking 
   $$\eta\leq\frac{1}{||E^{-1}||}\leq\frac{||u||_{L^{2}(\partial\Omega)}}{||E^{-1}(u)||_{H^2}}$$
   so we have that 
    $$\eta||E^{-1}(u)||_{H^2}\leq ||u||_{L^{2}(\partial\Omega)} $$ 
    Since $u\in L^{2}(\partial\Omega)\exists y: ~u=Ey=\frac{\partial y}{\partial\nu}+\mu_{j}y+qy$ and $E^{-1}u=y$
    Therefore, 
    $$\eta||y)||_{H^2}\leq ||\frac{\partial y}{\partial\nu}+\mu_{j}y+qy||_{L^{2}(\partial\Omega)}$$}
   \end{remark} 
 The proof is complete
 \end{proof}  
 The following lemma is essentially due to De Figueredo\cite{FS} in the entire space we will make new version for the boundary, the proof is similar to the proof in \cite{IK2} replace $[0,2\pi]$
 by $\partial\Omega$
 \begin{lemma}\label{le4}
Let  $g\colon\partial\Omega\times\mathbb{R}\rightarrow\mathbb{R}$
be a function verifying  Carath\'{e}odory conditions and satisfying the following conditions
\begin{description}
\item[i] There exist functions $a,A\in L^{2}(\partial\Omega)$ and constants $R_{1},R_{2}$ with 
$R_{1}<0<R_{2},$ such that 
$$g(x,u)\geq A(x)$$
for $a.e.;~x\in \partial\Omega$ and all $u\geq R_{2},$ 
$$g(x,u)\leq a(x)$$
for $a.e.;~x\in \partial\Omega$ and all $u\leq R_{1}.$
\item[ii] There exist functions $b,c\in L^{2}(\partial\Omega)$ and a constant $B\geq 0$ such that 
$$g(x,u)\leq c(x)|u|+b(x)$$
for $a.e.;~x\in \partial\Omega$ and all $u\geq B,$ \\
\textbf{Then}, \\
for each real number $k>0,$ there is decomposition 
 \begin{equation}\label{18.5}
\begin{gathered}
g(x,u)=q_{k}(x,u)+g_{k}(x,u)
\end{gathered}
\end{equation}
of g by functions $q{k},$ and $g_{k}$ verifying Carath\'{e}odory conditions and satisfying the following conditions
\begin{equation}\label{18.6}
\begin{gathered}
0\leq uq_{k}(x,u),~~0\leq ug_{k}(x,u)
\end{gathered}
\end{equation}
for $a.e.;~x\in \partial\Omega$ and all $u\in \mathbb{R},$
\begin{equation}\label{18.7}
\begin{gathered}
|q_{k}(x,u)|\leq c(x)|u|+b(x)+k
\end{gathered}
\end{equation}
for $a.e.;~x\in \partial\Omega$ and all $u$ with $|u|\geq\max(1,B),$ there is a function 
$\sigma_{k}\in L^{2}(\partial\Omega)$ depending on $a,A,$ and $g$ such that 
\begin{equation}\label{18.8}
\begin{gathered}
|g_{k}(x,u)|\leq\sigma_{k}
\end{gathered}
\end{equation}
for $a.e.;~x\in \partial\Omega$ and all $u\in \mathbb{R},$
 \end{description}
 
 \end{lemma}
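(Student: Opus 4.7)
My plan is to build $q_{k}$ and $g_{k}$ via an explicit sign-preserving truncation of $g$, in the spirit of De Figueiredo's classical decomposition. Set $B_{0}:=\max(1,B,|R_{1}|,R_{2})$, so that for $|u|\geq B_{0}$ both the one-sided bounds in (i) and the growth bound in (ii) are simultaneously active.

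The natural definition is to let $q_{k}$ absorb the part of $g$ that fits, with matching sign, under the majorant $c(x)|u|+b(x)+k$, and to put the residue into $g_{k}$. For $u\geq 0$ I would set
\[
q_{k}(x,u) := \min\!\bigl\{\,\max\{g(x,u),0\},\; c(x)u+b(x)+k\,\bigr\},
\]
and make the symmetric definition for $u<0$, so that $uq_{k}(x,u)\geq 0$ and $|q_{k}(x,u)|\leq c(x)|u|+b(x)+k$ hold by construction. Put $g_{k}:=g-q_{k}$. The Carath\'eodory property of both pieces follows at once from the continuity of $\max$ and $\min$, and (\ref{18.7}) is immediate. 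The sign condition $ug_{k}\geq 0$ is checked by cases: if $g$ and $u$ share a sign, the subtraction $g-q_{k}$ only peels off the excess above the majorant and preserves the common sign; in the opposite-sign case the inner $\max\{\cdot,0\}$ forces $q_{k}=0$, so $g_{k}=g$, and this situation is controlled by (i).

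The principal difficulty is producing $\sigma_{k}\in L^{2}(\partial\Omega)$ dominating $|g_{k}|$. I would argue zone by zone. In the bounded range $|u|\leq B_{0}$, the Carath\'eodory estimate (\ref{00}) yields $|g(x,u)|\leq \gamma_{B_{0}}(x)$, and the construction gives $|q_{k}|\leq c(x)B_{0}+b(x)+k$, so $|g_{k}|\leq \gamma_{B_{0}}+cB_{0}+b+k$. For $u\geq B_{0}$, condition (i) gives $g\geq A$ and (ii) gives $g\leq cu+b$; since $cu+b<cu+b+k$, the outer $\min$ is never active, so $q_{k}=\max\{g,0\}$ and $g_{k}=\min\{g,0\}$, which is bounded in absolute value by $|A(x)|$. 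The region $u\leq -B_{0}$ is treated symmetrically with $|a(x)|$. Setting
\[
\sigma_{k}(x):=\gamma_{B_{0}}(x)+c(x)B_{0}+b(x)+k+|A(x)|+|a(x)|,
\]
every summand lies in $L^{2}(\partial\Omega)$ by hypothesis (using $c,b\in L^{\infty}(\partial\Omega)$ and the finite measure of $\partial\Omega$), so $\sigma_{k}\in L^{2}(\partial\Omega)$ dominates $|g_{k}|$ and closes the proof. The subtle step is the second paragraph's case analysis: the $\max\{\cdot,0\}$ inside the definition of $q_{k}$ is precisely what is needed to preserve $uq_{k}\geq 0$ without ever forcing $q_{k}$ to cross the majorant, while the sign-disagreement zone for $g_{k}$ must be shown either to lie in $\{|u|\leq B_{0}\}$ or to be absorbed into the $|A|$/$|a|$ tail bounds coming from (i).
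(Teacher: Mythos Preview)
The paper does not prove this lemma; it only cites De~Figueiredo \cite{FS} and says the argument of \cite{IK2} transfers from $[0,2\pi]$ to $\partial\Omega$ verbatim. Your $\min/\max$ truncation is exactly the De~Figueiredo-type construction those references employ, so the overall strategy matches what the paper intends.

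There is, however, one genuine issue your argument glosses over. The conclusions $uq_k\ge 0$ and $ug_k\ge 0$ in (\ref{18.6}) together force $ug(x,u)\ge 0$ for \emph{all} $u$, yet this sign condition is not among the stated hypotheses of the lemma (it is hypothesis (\ref{01}) in Theorem~\ref{thm1} and a standing assumption in \cite{IK2}, which is why the lemma is applicable there). Your ``opposite-sign'' case is accordingly not handled: when $u>0$ and $g(x,u)<0$ your construction gives $q_k=0$ and $g_k=g<0$, whence $ug_k<0$; condition (i) only says $g\ge A$ with $A$ possibly negative, so it does not ``control'' this case as you claim. This is really a defect in the lemma's statement rather than in your approach, but the proof should make explicit that $ug\ge 0$ is needed. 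Two minor slips: you write $c,b\in L^{\infty}(\partial\Omega)$ at the end, whereas the hypothesis is only $c,b\in L^{2}(\partial\Omega)$ --- fortunately every summand of your $\sigma_k$ already lies in $L^{2}$, so the conclusion survives; and your claim that ``the outer $\min$ is never active'' for $u\ge B_0$ tacitly requires $c(x)u+b(x)+k\ge 0$, which holds in the application (where $c\ge 0$ and $b\in L^\infty$) but is not guaranteed by $c,b\in L^2$ alone.
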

 Assume that the function $g:\partial\Omega\times\mathbb{R}\to\mathbb{R}$ satisfies 
 Carath\'{e}odory conditions and grows at most linearly $i.e.;$
 \begin{equation}\label{000}
\begin{gathered}
|g(x,u)|\leq d|u|+e(x)
\end{gathered}
\end{equation}
 
 for some constant $d\geq 0$, some $e\in L^{2}(\partial\Omega)~a.e.x\in \partial\Omega$ and all $u\in\mathbb{R}.$ by those assumptions
  now we can defined the nonlinear (Nemystk\v{i}i) operator 
 $$\v{N}:W_{p}^{1-\frac{1}{p}}(\partial\Omega)\subset C(\partial\Omega)\to W_{p}^{1-\frac{1}{p}}(\partial\Omega) $$
 by 
 $$\v{N}u:=g(.,u(.))$$
 We shall consider solvability of the equation  ( we will add and subtrac ($\mu_{j}u)$)
 \begin{equation}\label{18.9}
\begin{gathered}
Lu-\mu_{j}u-\v{N}u+\mu_{j}u=h~\quad\text{ }\forall u\in Dom(L)
\end{gathered}
\end{equation}
Eq(\ref{E1}) is then equivalent to (\ref{18.9})
 \begin{proof}
 Proof of Theorem\ref{thm1}. Let $\delta>0$ be associated to the function $\Gamma$ by Lemma\ref{le1}. Then, by the assumption
 \ref{02}, there exist $B(\delta)=B>0$ and $b=b(\delta)\in L^{\infty}(\partial\Omega),$ such that 
 \begin{equation}\label{18}
\begin{gathered}
|g(x,u)|\leq(\Gamma(x)+\frac{\delta}{4})|u|+b(x),
\end{gathered}
\end{equation}
for $a.e.;~x\in \partial\Omega$, and all $u\in\mathbb{R},$ with $|u|\geq B.$ 
Useing Lemma\ref{le4} with $k=1$, equation (\ref{18.9} is then equivalent to  
\begin{equation}\label{18.10}
\begin{gathered}
Lu-\mu_{j}u-q_{1}(.,u(.))-g_{1}(.,u(.))+\mu_{j}u=h~\quad\text{ }\forall u\in Dom(L)
\end{gathered}
\end{equation}
Where $q_{1},g_{1}$ are Carath\'{e}odory functions satisfying conditions (\ref{18.6}) and (\ref{18.9}). 
Moreover by (\ref{18.7})
\begin{equation}\label{18.00}
\begin{gathered}
|q_{1}(x,u)|\leq (\Gamma(x)+\frac{\delta}{4})|u|+b(x)+1
\end{gathered}
\end{equation}
for $a.e.;~x\in \partial\Omega$ and all $u$ with $|u|\geq\max(1,B),$
Let us choose $\bar{B}>\max(1,B)$ such that 

\begin{equation}\label{18.17.1}
\begin{gathered}
\frac{(b(x)+1)}{|u|}<\frac{\delta}{4}
\end{gathered}
\end{equation}
for $a.e.;~x\in \partial\Omega$ and all $u$ with $|u|\geq\bar B,$. It follows (\ref{18.00}) and (\ref{18.17.1}), one has 

\begin{equation}\label{18.17.2}
\begin{gathered}
0\leq u^{-1}q_{1}(x,u)\leq\Gamma(x)+\frac{\delta}{2}
\end{gathered}
\end{equation}

for $a.e.;~x\in \partial\Omega$ and all $u$ with $|u|\geq\bar B,$.

 Let us define 
$$\widetilde{\gamma}:\partial\Omega\times\mathbb{R}\to\mathbb{R}$$ by 
\begin{displaymath}
   \widetilde{\gamma}(x,u) = \left\{
     \begin{array}{ccc}
       u^{-1}q_{1}(x,u)                                        & \quad\text{for}&|u|\geq \bar B \\
       \bar B^{-1}q_{1}(x,\bar B)(\frac{u}{\bar B})+(1-\frac{u}{\bar B})\Gamma(x)  &\quad\text{for}&0\leq u\leq \bar B\\
       \bar B^{-1}q_{1}(x,-K)(\frac{u}{\bar B})+(1+\frac{u}{\bar B})\Gamma(x) & \quad\text{for}& -\bar B\leq u\leq 0
     \end{array}
   \right.
\end{displaymath}
Then, by assumption (\ref{18.6}) and the relation (\ref{18.17.2}), we have 
\begin{equation}\label{19}
\begin{gathered}
0\leq\widetilde{\gamma}(x,u)\leq \Gamma(x)+\frac{\delta}{2}
\end{gathered}
\end{equation}
for $a.e.;~x\in \partial\Omega$, and all $u\in\mathbb{R}.$  Moreover the function $\widetilde{\gamma}(x,u)u$ satisfies
Carath\'{e}odory condition and $$f:\partial\Omega\times\mathbb{R}\to\mathbb{R}$$ defined  by 

\begin{equation}\label{20}
\begin{gathered}
f(x,u):=g_{1}(x,u)+q_{1}(x,u)-\widetilde{\gamma}(x,u)u,
\end{gathered}
\end{equation}

is such that for $a.e.;~x\in \partial\Omega$, and all $u\in\mathbb{R}.$
\begin{equation}\label{21.010}
\begin{gathered}
|f(x,u)|\leq v(x)
\end{gathered}
\end{equation}
For some $v\in L^{2}(\partial\Omega)$ dependent only on $\Gamma$ and $\gamma_{K}$ given by (\ref{000})

Now Let $$h=-H$$

Therefore, equation (\ref{E1},\ref{18.9},\ref{18.10}) is equivalent $\forall u\in Dom(L)$
\begin{equation}\label{21}
\begin{gathered}
Lu-\mu_{j}u-\widetilde{\gamma}(.,u(.))u-f(.,u(.))+\mu_{j}u=-H(.)~~\forall u\in Dom(L) ~~a.e.; x\in\partial\Omega
\end{gathered}
\end{equation}
to which we shall apply Mawhin's continuation theorem \cite{M1}
Let us define
$$G:W_{p}^{1-\frac{1}{p}}(\partial\Omega)\subset C(\partial\Omega)\to W_{p}^{1-\frac{1}{p}}(\partial\Omega)$$ by
$$Gu=\widetilde\gamma(.,u(.)u(.)+f(.,u(.))-H(x)$$
$$A:W_{p}^{1-\frac{1}{p}}(\partial\Omega)\subset C(\partial\Omega)\to W_{p}^{1-\frac{1}{p}}(\partial\Omega)$$ by 
$$Au=\frac{\delta}{2}u(.)$$
Equation (\ref{21}) is equivalent to solving 
\begin{equation}\label{21.1}
\begin{gathered}
Lu-\mu_{j}u-Gu+\mu_{j}u=0
\end{gathered}
\end{equation}
in $Dom(L)$\\ 
If $N(L)$ is finite dimensional, it is clear that $L$ is a linear Fredholm of index zero see \cite{MN} and $G$ and $A$ are well defined
and $L-compact$ on bounded of $W_{p}^{1-\frac{1}{p}}(\partial\Omega)$. By theorem $IV.12$ in \cite{M1}, equation (\ref{21.1}) will have a solution
if we can show that for any $\lambda\in[0,1)$ and any $u\in Dom(L)$ such that 
\begin{equation}\label{21.2}
\begin{gathered}
Lu-\mu_{j}u-(1-\lambda)Au-\lambda Gu+\mu_{j}u=0
\end{gathered}
\end{equation}
one has $||u||_{C^{1}(\partial\Omega)}<K_{0}$ (for some constant $K_{0}>0$ independent of $\lambda$ and $u$)
If $u\in Dom(L)$ satisfies (\ref{21.2}) for some $\lambda\in[0,1)$, then one has 
\begin{equation}\label{21.3}
\begin{gathered}
Lu(x)-\mu_{j}u(x)-[(1-\lambda)\frac{\delta}{2}+\lambda\widetilde\gamma(x,u(x))]u(x)-\lambda Gu+\lambda H(x)+\mu_{j}u(x)=0
\end{gathered}
\end{equation}
with, by (\ref{19})
$$0\leq(1-\lambda)\frac{\delta}{2}+\lambda\widetilde\gamma(x,u(x))\leq\Gamma(x)+\frac{\delta}{2}$$
for $a.e.;~x\in \partial\Omega$
\begin{remark}\label{rem1}

$$\mu_{j}\langle\widetilde{u}-(\bar{u}+u^{0}),u(.)\rangle_{\partial}=
\mu_{j}(\langle\widetilde{u},\widetilde{u}\rangle_{\partial}-\langle\bar{u},\bar{u}\rangle_{\partial}-\langle{u^{0}},{u^{0}}\rangle_{\partial}$$
$$\geq \mu_{j}(-\langle\bar{u},\bar{u}\rangle_{\partial}-\langle{u^{0}},{u^{0}}\rangle_{\partial})$$
By Cauchy Schwarz inequality
$$\mu_{j}\langle\widetilde{u}-(\bar{u}+u^{0}),u(.)\rangle_{\partial}\geq-(||\widetilde{u}||_{H^{1}(\partial\Omega)}+||\bar{u}||_{H^{1}(\partial\Omega)}
+||{u^{0}}||_{H^{1}(\partial\Omega)})$$
\end{remark}

It is clear for $\lambda=0,$ equation  (\ref{21.2}) has only the trivial $i.e.;~u=0$ solution in $Dom(L)$. Now if $u\in Dom(L)$ is solution of 
(\ref{21.2}) foe some $\lambda\in[0.1)$, then using lemma \ref{le2}, Cauchy Schwarz inequality,theorem \ref{th} and Remark \ref{rem1}  we get 

$$0=\langle \widetilde{u}-(\bar{u}+u^{0}),Lu(x)-[\mu_{j}+(1-\lambda)\frac{\delta}{2}+\lambda\widetilde\gamma(.,u(.))]u(.) \rangle_{\partial}
+\langle\widetilde{u}-(\bar{u}+u^{0}),\lambda H(.)-f(.,u(.)\rangle_{\partial}+\mu_{j}\langle\widetilde{u}-(\bar{u}+u^{0}),u(.)\rangle_{\partial}\geq
$$
$$ \frac{\delta}{2}||{u^{\bot}}||^{2}_{H^{1}(\partial\Omega)}-(||\widetilde{u}||_{H^{1}(\partial\Omega)}+||\bar{u}||_{H^{1}(\partial\Omega)}
+||{u^{0}}||_{H^{1}(\partial\Omega}))\big(\mu_{j}+||h||_{L^{2}(\partial\Omega)}+||f(.,u(.)||_{L^{2}(\partial\Omega)}\big)$$
$$
\geq \frac{\delta}{2}||{u^{\bot}}||^{2}_{H^{1}(\partial\Omega)}-(||\widetilde{u}||_{H^{1}(\partial\Omega)}+||\bar{u}||_{H^{1}(\partial\Omega)}
+||{u^{0}}||_{L^{2}(\partial\Omega}))\big(\mu_{1}+||h||_{L^{2}(\partial\Omega)}+||f(.,u(.)||_{L^{2}(\partial\Omega)}\big)
$$
and by inequality(\ref{21.010}) we have 
\begin{equation}\label{21.4}
\begin{gathered}
0 \geq \frac{\delta}{2}||{u^{\bot}}||^{2}_{H^{1}(\partial\Omega)}-\beta\big(||{u^{\bot}}||^{2}_{H^{1}(\partial\Omega)}+||{u^{0}}||_{H^{1}(\partial\Omega)}\big)
\end{gathered}
\end{equation}
 For some constant $\beta>0$ dependent only on $\mu_{1},~v$, and $H$ (but not on $u$ or $\lambda$. So that, 
 taking $\alpha=\beta(\delta)^{-1}$ we have 
 \begin{equation}\label{21.5}
\begin{gathered}
||{u^{\bot}}||^{2}_{H^{1}(\partial\Omega)}\leq \alpha+\sqrt{\alpha^{2}+2\alpha||{u^{0}}||_{H^{1}(\partial\Omega)}}
\end{gathered}
\end{equation}
\begin{claim}
There exist a constant $K_{0}>0$ such that 
$||u||_{C^{1}(\partial\Omega)}<K_{0}$
for all $u\in Dom(L)$ satisfies (\ref{21.2}) $K_{0}>0$ independent of $\lambda$ and $u$)
\end{claim}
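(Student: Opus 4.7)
The plan is to argue by contradiction. Suppose no such constant $K_0$ exists; then there are sequences $\{\lambda_n\}\subset[0,1)$ and $\{u_n\}\subset Dom(L)$ satisfying (\ref{21.2}) with $\|u_n\|_{C^1(\partial\Omega)}\to\infty$. I will normalize in the resonance eigenspace, extract a non-trivial limit, and combine the orthogonality hypothesis (\ref{04}), the sign condition (\ref{01}), and the strict inequality in (\ref{03}) to reach a contradiction.

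\textbf{Reduction to $H^1$-blow-up and normalization in the kernel.} Since $\widetilde\gamma$ and $\Gamma$ are in $L^\infty(\partial\Omega)$ and $f,H\in L^2(\partial\Omega)$, the right-hand side of (\ref{21.2}) has at most linear growth in $u$. Applying the Agmon--Douglis--Nirenberg $W^{2,p}$-theory for $-\Delta+c$ with the corresponding Neumann-type boundary data thus yields $\|u_n\|_{W^{2,p}(\Omega)}\le C(1+\|u_n\|_{L^p(\partial\Omega)})$, and via the Sobolev embedding $W^{2,p}\hookrightarrow C^1$ together with the trace theorem, $\|u_n\|_{C^1(\partial\Omega)}\to\infty$ forces $\|u_n\|_{H^1(\partial\Omega)}\to\infty$. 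The quadratic estimate (\ref{21.5}) then implies $\|u_n^0\|_{H^1(\partial\Omega)}\to\infty$ (otherwise $\|u_n^\bot\|_{H^1(\partial\Omega)}$ would also be bounded) together with $\|u_n^\bot\|_{H^1(\partial\Omega)}=O(\|u_n^0\|_{H^1(\partial\Omega)}^{1/4})$. Setting $v_n:=u_n/\|u_n^0\|_\partial$, we have $\|v_n^0\|_\partial=1$ and $v_n^\bot\to 0$ in $H^1(\partial\Omega)$; since $N(L-\mu_jI)$ is finite-dimensional (Theorem \ref{th}), a subsequence of $\{v_n^0\}$ converges strongly to some $v^0\in N(L-\mu_jI)$ with $\|v^0\|_\partial=1$. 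By unique continuation for Steklov eigenfunctions, $v^0\neq 0$ a.e.\ on $\partial\Omega$, and on $\{v^0\ne 0\}$ one has $|u_n(x)|\to\infty$ with $\mathrm{sign}(u_n(x))=\mathrm{sign}(v^0(x))$ for large $n$.

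\textbf{Projection onto the resonance eigenspace.} Testing (\ref{21.2}) against $v^0$ and using (i) self-adjointness of $L$ on $L^2(\partial\Omega)$ so that $\langle Lu_n,v^0\rangle_\partial=\mu_j\langle u_n^0,v^0\rangle_\partial$, (ii) the identity $\widetilde\gamma u+f=g$ from (\ref{20}), and (iii) the orthogonality (\ref{04}) which annihilates $\int h\,v^0\,dx$, one obtains
\begin{equation*}
\bigl(\mu_j-(1-\lambda_n)\tfrac{\delta}{2}\bigr)\langle u_n^0,v^0\rangle_\partial = \lambda_n\int_{\partial\Omega}g(x,u_n)\,v^0(x)\,dx.
\end{equation*}
After a further subsequence with $\lambda_n\to\lambda^*\in[0,1]$, dividing by $\|u_n^0\|_\partial$ sends the LHS to the positive constant $\mu_j-(1-\lambda^*)\tfrac{\delta}{2}$. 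For the RHS, I write $g(x,u_n)/\|u_n^0\|_\partial=(g(x,u_n)/u_n)\,v_n$; by (\ref{01}) and (\ref{02}), on $\{v^0\ne 0\}$ the quotient $g(x,u_n)/u_n$ lies in $[0,\Gamma(x)+\sigma]$ eventually, so a further subsequence has a weak-$\ast$ limit $\theta\in L^\infty(\partial\Omega)$ with $0\le\theta(x)\le\Gamma(x)+\sigma$. Combined with $v_n\to v^0$ strongly in $L^2(\partial\Omega)$, the RHS converges to $\lambda^*\!\int\theta(x)(v^0)^2\,dx$.

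\textbf{Closing the contradiction, and the main obstacle.} Passing to the limit in (\ref{21.2}) itself, divided by $\|u_n^0\|_\partial$, yields $Lv^0=[(1-\lambda^*)\tfrac{\delta}{2}+\lambda^*\theta(x)]v^0$; since $Lv^0=\mu_jv^0$, this forces $\theta(x)\equiv c_0:=[\mu_j-(1-\lambda^*)\tfrac{\delta}{2}]/\lambda^*$ a.e.\ on $\{v^0\ne 0\}$. Combined with $\theta\le\Gamma+\sigma$ this gives $\Gamma(x)\ge c_0-\sigma$ a.e.\ on the full-measure set $\{v^0\ne 0\}$; choosing $\sigma$ and $\delta$ sufficiently small in the construction of Lemmas \ref{le1}--\ref{le2}, so that $c_0-\sigma\ge\mu_{j+1}-\mu_j$, this is incompatible with the hypothesis $\Gamma(x)<\mu_{j+1}-\mu_j$ on a subset of $\partial\Omega$ of positive measure. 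The \emph{main obstacle}, I expect, is precisely this limit step: one must pass to the weak-$\ast$ limit in the nonlinear quotient $g(x,u_n)/u_n$ using only the pointwise sign (\ref{01}) and linear growth (\ref{02}), rather than any Landesman--Lazer oscillation hypothesis, and then use unique continuation of the Steklov eigenfunction $v^0$ together with the strict inequality in (\ref{03}) to turn the identity ``$\theta$ constant on $\{v^0\ne 0\}$'' into a genuine contradiction.
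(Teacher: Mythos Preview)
Your projection onto the kernel is miscomputed, and this is fatal for the rest of the argument. Recall that in the paper $L=\partial/\partial\nu-\mu_j$, so that $v^0\in N(L)$ means $Lv^0=0$ and, by Green's formula (both $u_n$ and $v^0$ solve $-\Delta+c=0$ in $\Omega$), $\langle Lu_n,v^0\rangle_\partial=\langle u_n,Lv^0\rangle_\partial=0$. In equation~(\ref{21.2}) the terms $-\mu_j u$ and $+\mu_j u$ cancel, so testing against $v^0$ and using (\ref{04}) yields
\[
-(1-\lambda_n)\tfrac{\delta}{2}\,\langle u_n^0,v^0\rangle_\partial \;=\;\lambda_n\int_{\partial\Omega} g(x,u_n)\,v^0\,dx,
\]
with no $\mu_j$ on the left. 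Dividing by $\|u_n^0\|_\partial$ sends the left-hand side to $-(1-\lambda^*)\tfrac{\delta}{2}\le 0$, not to the positive number $\mu_j-(1-\lambda^*)\tfrac{\delta}{2}$. Carrying this through your limit-in-the-equation step gives $[(1-\lambda^*)\tfrac{\delta}{2}+\lambda^*\theta(x)]v^0=Lv^0=0$, hence $\lambda^*=1$ and $\theta\equiv 0$ a.e.\ on $\{v^0\neq 0\}$. This is perfectly compatible with $0\le\theta\le\Gamma+\sigma$, so the strict inequality in~(\ref{03}) produces no contradiction. In addition, $\delta=\delta(\Gamma)>0$ is \emph{fixed} by Lemma~\ref{le1}; it is not a free small parameter you may tune at the end, so the inequality ``$c_0-\sigma\ge\mu_{j+1}-\mu_j$'' cannot be arranged.

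The paper's route is different and more direct. It normalises by $\|u_n\|_{C^1(\partial\Omega)}$, uses the compact inverse $E^{-1}$ from Lemma~\ref{le3} to extract a $C^1$ limit $v\in N(L)$ with $\|v\|_{C^1}=1$, and then invokes the \emph{sign} of the limiting eigenfunction (so that $u_n$ is eventually of one sign on $\partial\Omega$). The contradiction then comes from the sign hypothesis~(\ref{01}): on one side the projected equation forces $\int_{\partial\Omega} g(x,u_n)\,v^0\,dx$ to have a strict sign, while $g(x,u_n)u_n\ge 0$ and the sign of $u_n$ force the opposite sign. In short, the decisive ingredient is~(\ref{01}) together with eigenfunction positivity, not the gap condition~(\ref{03}); your weak-$\ast$ quotient argument never engages~(\ref{01}) in this way, and with the corrected projection it cannot close.
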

Proof of the claim. Assume that the claim does not hold. Then, there will be sequence $\{\lambda_{n}\}_{n=1}^{\infty}$
in the open interval $(0,1)$, and $\{u_{n}\}_{n=1}^{\infty}$ in ${C^{1}(\partial\Omega)}$ with
$||u_{n}||_{C^{1}(\partial\Omega)}\geq n$  (in which  $q\in (0,\mu_{j+1}-\mu_{j})$ with $q=\frac{\delta}{2}$ fixed  such that 
\begin{equation}\label{25}
\begin{gathered}
Lu_{n}+(1-\lambda_{n})qu_{n}-\lambda_{n}g(x,u_{n})=\lambda_{n}h.
\end{gathered}
\end{equation}
Let $v_{n}=\frac{u_{n}}{||u_{n}||_{C^{1}(\partial\Omega)}},$ we have
$$||u_{n}||_{C^{1}(\partial\Omega)}(Lv_{n}+qv_{n})=
\lambda_{n}h+\lambda_{n}qu_{n}+\lambda_{n}g(x,u_{n})$$
\begin{equation}\label{25.1}
\begin{gathered}
Lv_{n}+qv_{n}=
\frac{\lambda_{n}h}{||u_{n}||_{C^{1}(\partial\Omega)}}+\lambda_{n}qv_{n}+\frac{\lambda_{n}g(x,u_{n})}{||u_{n}||_{C^{1}(\partial\Omega)}}
\end{gathered}
\end{equation}
or 
\begin{equation}\label{25.01}
\begin{gathered}
\frac{\partial v_{n}}{\partial \nu}-\mu_{j}+qv_{n}=
\frac{\lambda_{n}h}{||u_{n}||_{C^{1}(\partial\Omega)}}+\lambda_{n}qv_{n}+\frac{\lambda_{n}g(x,u_{n})}{||u_{n}||_{C^{1}(\partial\Omega)}}
\end{gathered}
\end{equation}
 or equivalent 
 \begin{equation}\label{25.2}
\begin{gathered}
Ev_{n}=
\frac{\lambda_{n}h}{||u_{n}||_{C^{1}(\partial\Omega)}}+\lambda_{n}qv_{n}+\frac{\lambda_{n}g(x,u_{n})}{||u_{n}||_{C^{1}(\partial\Omega)}}
\end{gathered}
\end{equation}
Where $E:Dom(L)\subset C^{1}(\partial\Omega)\to L^{2}(\partial\Omega)$ is defined by 
$Ev=Lv+qv.$ According to Lemma \ref{le3} and compact embedding of $Dom(L)$ into $C^{1}(\partial)$ \cite{Bz}, $E$ is invertible and $E^{-1}$ 
is compact (completely continuous) as an operator from $L^{2}(\partial\Omega)$ into $C^{1}(\partial\Omega)$. On the other hand, by inequality  
(\ref{00}) and the growth condition \ref{02}, it follows that there exists a function $c\in L^{2}(\partial\Omega)$ depending only in 
$R=R(\delta)>0$ such that 
$$|g(x,u)|\leq(\Gamma(x)+q)|u|+b(x)+c(x)$$
for $a.e.;~x\in\partial\Omega$ and all $u\in\mathbb{R}$. so that, the sequence 
$\frac{g(x,u_{n})}{||u_{n}||_{C^{1}(\partial\Omega)}}$ is bounded in $L^{2}(\partial\Omega)$. 
Hence the right-hand member of equality (\ref{25.2})
is bounded in $L^{2}(\partial\Omega)$ independent of $n$. Therefore, writing equation (\ref{25.2}) in the equivalent form 
\begin{equation}\label{25.3}
\begin{gathered}
v_{n}=
E^{-1}\big[\frac{\lambda_{n}h}{||u_{n}||_{C^{1}(\partial\Omega)}}+\lambda_{n}qv_{n}+\frac{\lambda_{n}g(x,u_{n})}{||u_{n}||_{C^{1}(\partial\Omega)}}\big]
\end{gathered}
\end{equation}
and using the compactness of $E^{-1}:L^{2}(\partial\Omega)to C^{1}(\partial\Omega)$ we can assume (going if necsessary to a subsequence relabeled
$v_{n}$), that there exists $v\in C^{1}(\partial\Omega)$ such that $v_{n}\to v$ in $C^{1}(\partial\Omega)$ as $n\to\infty$, 
$||v||_{C^{1}(\partial\Omega)}=1$ and $v\in Dom(L)$ 
on the other hand using inequality \ref{21.4} or \ref{21.5} one deduces that $v_{n}^{\bot}\to o$ in $H^{1}(\partial\Omega)$. Therefore 
$v\in\widetilde{H}^{1}(\partial\Omega)$ i.e 
$$v(x)=A\widetilde{u}=A\widetilde{u}=\displaystyle\sum_{N<j<\infty}P_{j}u=A\displaystyle\sum_{N<j<\infty}\varphi_{j}$$
choose that $$||\widetilde{u}||^{C^{1}(\partial\Omega)}=1$$ 
for some $A\in\mathbb{R}$ Since $||v||_{C^{1}(\partial\Omega)}=1$ so that $A=\pm 1$ In what follows, we shall suppose that 
$v(x)=\widetilde{u}$ (the case $v(x)=-\widetilde{u}$ is treated in a similar way).
Now, using the fact that $v_{n}\to v$ in ${C^{1}(\partial\Omega)}$ and with since  $\widetilde u_{n}\in\ker{L}$   so 
$$\frac{\partial\widetilde{u}_n}{\partial\nu}-\mu_{j}\widetilde u_{n}\to 0.$$ So that for $n\geq n_{0}$
$v_{n}(x)>0$ for $a.e.;~x\in\partial\Omega$ 
so, 
\begin{equation}\label{25.4}
\begin{gathered}
u_{n}>0~~~u_{n}\in Dom(L)
\end{gathered}
\end{equation}
Now writing 
$$v_{n}=\bar{v}_{n}+v_{n}^{0}+\widetilde{v}_n$$
we have that 
$\bar{v}_{n}=A_{n} bar{v}=A_{n}\displaystyle\sum_{1<j<N}\varphi_{j},$
${v}_{n}^{0}=B_{n}v^{0}=B_{n}\varphi_{N},$ 
$\widetilde{v}_n=C_{n}\widetilde{v}=C_{n}\displaystyle\sum_{N<j<\infty}\varphi_{j}$
Let us look back to equation (\ref{25.01}). Taking the inner product in ($L^{2}(\partial\Omega))$ of (\ref{25.01})
with $\widetilde{v}_{n}$, remarking that $\lambda_{n}\in(0,1)$ and considering assumption (\ref{04}), we deduce that 
$$\frac{\lambda_{n}}{||u_{n}||_{C^{1}(\partial\Omega)}}\int_{\partial\Omega}g(x,u_{n}(x))(\widetilde{v}_{n})dx<0$$ for all $n$ 
sufficiently large so $\int_{\partial\Omega}g(x,u_{n}(x))(\widetilde{v})dx<0$ this is a contradiction, since by (\ref{25.4}) and assumption 
\ref{01} one has the $g(x,u_{n}(x))(\widetilde{v})\geq 0$ on $x\in\partial\Omega$ for $n\geq n_{0}$, 
and the proof is complete.

 \end{proof}
 
\begin{example}
Let $\Omega\subset\mathbb{R}^N$ , $N\geq 2$ is a bounded domain  with boundary
$\partial \Omega$ of class $C^2$ and  $\partial\Omega=A\cup B,$ consider equation 
\begin{equation}\label{Ex1}
\begin{gathered}
 -\Delta u + c(x)u = 0 \quad\text{in } \Omega,\\
\frac{\partial u}{\partial \nu}=\mu_{1}u+g(x,u)+h(x)  \quad\text{on }
\partial\Omega,
\end{gathered}
\end{equation}
Where $\mu_{1}$ the first eigenvalue (\ref{E2}), and $g:\partial\Omega\times\mathbb{R}\to\mathbb{R}$ is defined  by 
\begin{displaymath}
   g(x,u) = \left\{
     \begin{array}{ccc}
       \mu_{1}u(x)\sin^{2}(u(x))            & \forall x\in A\\
       0                            & \forall x\in A\cap B \\
       0                          & \forall x\in B      
     \end{array}
   \right.
\end{displaymath}
It is seen that that all the assumptions of Theorem \ref{thm1} are fulfilled So that equation (\ref{Ex1})
has at least one solution for any $h\in L^{2}(\partial\Omega)$ with 
$$\int_{\partial\Omega}h(x)\varphi_{1}(x)dx=0$$ where $\varphi_{1}$ the first eigenfunction of (\ref{E2})
Obvously $g(x,.)$ is dos not satisfy the Landesman-Lazer conditions since $$\limsup_{u\to-\infty}g(x,u)=\liminf_{u\to\infty}g(x,u)=0$$
Notice that $g$ is unbounded
\end{example}
 \begin{remark} If you consider the problem 
 \begin{equation}\label{FE}
\begin{gathered}
 -\Delta u + c(x)u = f(x,u) \quad\text{in } \Omega,\\
\frac{\partial u}{\partial \nu}=\mu_{j}u+g(x,u)+h(x)  \quad\text{on }
\partial\Omega,
\end{gathered}
\end{equation}
Step by step the approach in \ref{thm1} with obivous modifications in the $$Dom(L):=\{u\in W^{2}_{p}(\Omega):-\Delta u+c(x)u-f(x,u)=0\}$$
and the notation, Then Eq(\ref{FE}) has at least one solution. \cite{A}  
 \end{remark}

\end{document}